\newtheorem{theorem}{Theorem}
\newtheorem{corollary}{Corollary}
\newtheorem{lemma}{Lemma}
\newtheorem{proposition}{Proposition}
\newenvironment{proof}[1][Proof]{\textbf{#1.} }{\ \rule{0.5em}{0.5em}}
\long\def\symbolfootnote[#1]#2{\begingroup%
\def\thefootnote{$\;$}\footnote[#1]{$^*$#2}\endgroup}
\begin{document}

\title{A note on asymptotic density}
\author{Ryszard Frankiewicz and Joanna Jureczko\footnote{corresponding author}}

	\maketitle
	
	\footnote{Mathematics Subject Classification: 54B15, 03E20, 03E35, 03E40, 03E65.
		
		\hspace{0.2cm}
		Keywords: \textsl{asymptotic density, Boolean algebra, embeddings, Cantor cube, Haar measure, forcing, Cohen reals, limits, Hausdorff gaps, P-point, OK-point.}}
	
	\begin{abstract}	It is proved that $P(\omega)/\triangle_d$, where $\triangle_d$ is the ideal of sets of asymptotic density zero, is universal in the sense of embeddings.
	\end{abstract}
	
	\section{Introduction}
	
	We examine the algebra $P(\omega)/\triangle_d$, where $\triangle_d$ is the ideal of sets of asymptotic density zero. We start this paper with Proposition 1, in which we show that  $P(\omega)/ fin$ is embedded into $P(\omega)/\triangle_d$.
	Thus, one can suspect that $P(\omega)/\triangle_d$ has a number of common features with the algebra $P(\omega)/fin$. 
	Actually, so, because, for example both  algebras have no $(\omega, \omega)$-gaps, (compare \cite[p. 38]{FZ} and Proposition 2 in this paper). They are also different in some aspects, i.e., $P(\omega)/fin$ has no $\omega$-limits, (see \cite[p.33]{FZ}) but, as we will show in Proposition~3, $P(\omega)/\triangle_d$ has.
	Moreover, as we will show in Proposition 4, $(P(\omega)/\triangle_d)^\omega $ is isomorphic to $P(\omega)/\triangle_d$, the same property is not true for $P(\omega)/fin$ even for the finite product, (see e.g.\cite{GJ} p. 89 and 215 for details).
	
	Furthermore, the space $\omega^* = st(P(\omega)/fin)$ was recognized as homogeneous up to 1956 when  W. Rudin proved that under CH it has $P$-points, (see \cite{R} and \cite[p. 79]{FZ}). In 1978  K. Kunen proved in ZFC that there exist weak P-points  in $\omega^*$, (see \cite{K1}), more precisely, he showed  that in $\omega^*$ there are weak $P$-points which are not $P$-points. (Later in \cite{S} there was proved that in some models $P$-points  do not exist in $\omega^*$).  As we will show in Theorem~1, $P(\omega)/\triangle_d$ has a weak $P$-point, (our argumentation is based on \cite{K, K1}, where there is constructed a special kind of family, namely an independent linked family).

	By well known Parovi\v cenko theorem,  (see \cite[p.43-45]{FZ}),  $P(\omega)/fin$ is  universal in the sense of embeddings of algebras of cardinality continuum.
	However, as was shown by Shelah in \cite{S} the algebra $LM/\triangle$, (where $LM$ is the collection of all Lebesgue measurable sets and $\triangle $ is the ideal of sets of Lebesgue measure zero) consistently does not have to be embedded in $P(\omega)/fin$.
	However, it is true that after adding $\omega_2$ Cohen reals one can embed $LM/\triangle$ into $P(\omega)/fin$, but it is not equivalent to CH, (see \cite{CFZ}). In the presence of Proposition 1, we can conclude that there are such models for which there is no embedding $P(\omega)/ \triangle_d$ into $P(\omega)/fin$. Despite this result,
	we show that after adding $\omega_2$ Cohen reals  $ P(\omega)/\triangle_d$ can be embedded in  $ P(\omega)/fin$, (see Theorem 2 in which the proof we use the ideas presented in~\cite{CFZ}).
	
	In \cite{F} it is proved that there is a measure preserving embedding from $LM/\triangle$ into $P(\omega) /\triangle_d$.
	This statement leads us to the most spectacular result of this paper which we include in Theorem 3, i.~e.
	there exists an embedding $\phi \colon H(2^{\mathfrak{c}})/ \triangle_H \to P(\omega)/\triangle_d$ such that $d(\phi(a)) =~\mu(a),$ $a \in H(2^{\mathfrak{c}})/ \triangle_H$,  where $H(2^{\mathfrak{c}})$ is the family of all Haar measurable sets of $2^{\mathfrak{c}}$, $\mu$ is the Haar measure on the Cantor cube $2^{\mathfrak{c}}$ and $\triangle_H$ is the ideal of sets of Haar measure zero.
	(In the proof of Theorem 3 we use only combinatorial methods and ideas presented in paper \cite{F}).
	
	The paper is organized in two sections. In Section 2, there are gathered basic definitions and notations used in the further part of this paper. Section 3 is divided into four subsections containing: embeddings into $P(\omega) /\triangle_d$, the existence of a weak $P$-point in $st(P(\omega) /\triangle_d)$, the embeddings $P(\omega) /\triangle_d$ into $P(\omega) /fin$ (under assumptions mentioned above), the measure preserving embedding  $H(2^{\mathfrak{c}})/ \triangle_H$ into $P(\omega) /\triangle_d$.

	\section{Definitions and previous results}
	
	\textbf{1.1.} An ordinal $\omega$  denotes the set of non-negative integers,  $fin$  denotes the ideal of all finite subsets of $\omega$.
	\\
	\\
	\textbf{1.2.} Let $A \subseteq \omega$. \textit{Lower asymptotic density} $d_* (A)$   and  \textit{upper asymptotic density} $d^* (A)$  is defined respectively 
	$$d_*(A) = \lim_{n \to \infty} \inf \frac{|A\cap n|}{n}\ \textrm{ and }\  d^*(A) =\lim_{n \to \infty} \sup \frac{|A\cap n|}{n}.$$
	If
	$d_*(A)= d^*(A) = :d(A)$, then $d(A)$ is called \textit{(asymptotic) density}. 
	By $\triangle_d$ we denote the ideal of subsets of (asymptotic) density zero.
	Notice that sets with  density do not form an algebra, but in many aspects they behave as an algebra. In cases where density may not  exist we will  consider the upper density.
	\\
	\\
	\textbf{1.3.} Let $\mathbb{A}$ and $\mathbb{B}$ be Boolean algebras. We say that $\phi \colon \mathbb{A} \to \mathbb{B}$ is a \textit{regular embedding} iff
	\\
	(I) $\phi$ is homomorphism, 
	\\
	(II) $\phi$ is embedding,
	\\
	(III) if $\mathcal{A}$ is a maximal antichain in $\mathbb{A}$, then $\{\phi(A) \colon A \in \mathcal{A}\}$ is a maximal antichain in $\mathbb{B}$,
	\\
	(see also \cite[vol. 1]{MKB}). 
	\\
	\\
	\textbf{1.4.}
	Let $\mathbb{B}$ be a Boolean algebra.
	A \textit{gap} (or \textit{cut}) is a pair 
	$\langle S, T \rangle$, where $S, T \subseteq \mathbb{B}$ such that  $a\cdot b = \mathbb{O}$ for each $a \in S$ and $b \in T$.
	The element $c$ \textit{fills  (or separates)} a gap $\langle S, T \rangle$ if $a\leq c$ and $c \cdot b = \mathbb{O}$ for each $a \in S$ and $b \in T$.
	A pair of cardinals $(\kappa, \lambda)$, where $\kappa = |S|$ and $\lambda = |T|$ is called \textit{a type} of $\langle S, T \rangle$ if there is no $c\in \mathbb{B}$ which separates $\langle S, T \rangle$. The system without separating elements is called a  \textit{$(\kappa, \lambda)$-gap}.
	\\
	\\
	\textbf{1.5.}
	A descending chain $\{a_\alpha \colon \alpha < \gamma\}$ in Boolean algebra $\mathbb{B}$ by \textit{a $\gamma$-limit (a limit of length $\gamma$)}, if there is no positive $a$ such that $a < a_\alpha$ for each $\alpha < \gamma$.
	\\
	\\
	\textbf{1.6.}
	Let $\mathbb{B}$ be a Boolean algebra. The \textit{gap determined by $c$ over $\mathbb{B}$} is a pair $\langle A, B \rangle$, where $A$ consists of all elements of $\mathbb{B} $ with $a \leq c$ and $B$ consists of all elements of $\mathbb{B}$ with $c \leq b$.     
	$\langle A, B \rangle$ is \textit{countably generated} if $A$ is a countably generated as an ideal and $B$ is countably generated as a filter (possibly improper). 
	\\
	\\
	\textbf{1.7.}
	The $st(\mathbb{A})$  denotes the Stone space of $\mathbb{A}$. Then $\beta \omega = st(P(\omega))$ and  $\beta \omega \setminus \omega = st(P(\omega)/fin) = \omega^*$, (see \cite{CN, M} for details).
	\\
	\\
	\textbf{1.8.} Let $X$ be a topological space. A point $p \in X$ is called a \textit{$P$-point} iff  $p \in \textrm{int} (\bigcap_n U_n)$, whenever $\{U_n \colon n \in \omega\}$ is a countable family of neighbourhoods of $p$. A point $p$ is called a \textit{weak $P$-point} iff $p$ is not a limit point of any countable subset of $X \setminus \{p\}$. Clearly, every $P$-point is a weak $P$-point, but not conversely, (see \cite{K1}).
	\\
	\\
	\textbf{1.9.}
	$LM$  denotes the family of all Lebesgue measurable sets in $2^\omega$,  $\lambda$  denotes the Lebesgue measure  and   $\triangle$  denotes the ideal of subsets of Lebesgue measure zero.
	\\
	\\
	\textbf{1.10.}
	$H(2^{\kappa})$ denotes the family of all Haar measurable sets of $2^{\kappa}$,   where $\kappa$ is an infinite cardinal. The Haar measure $\mu$ is generated by the basic sets $[s] = \{f \in \{0, 1\}^{\kappa} \colon s \subseteq f\}$, where $s \in \{0, 1\}^{<\omega}$ and 
	$\mu([s]) = \frac{1}{2^{|s|}}$. The ideal  of sets of Haar measure zero is denoted by $\triangle_H$.
	
	Notice that we can consider $2^{\mathfrak{c}}$ as the product space $(2^\omega)^{\mathfrak{c}}$. 
	If we endow the Cantor cube $2^{\mathfrak{c}}$ with the structure of topological group with Haar measure $\mu$ then there is a continuous map 
	$\varphi \colon 2^{\mathfrak{c}} \to 2^\omega$ satisfying $\varphi(\mu) = \lambda$, where $\lambda$ is the Lebesgue measure on $2^\omega$, (it is folklore).
	Then all theorems concerning the base of such a product and product measure are applied (see \cite{B, E}).
	\\
	\\
	\textbf{1.11.} For definitions not cited here, we refer the reader to \cite{B, CN, E, FZ, GJ, M, MKB, SS}.

	\section{Main results}
	
	\subsection{}
	
	\begin{proposition}
		There exists a regular embedding $P(\omega)/ fin$ into $P(\omega)/\triangle_d$.
	\end{proposition}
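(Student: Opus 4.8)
The plan is to build a fixed partition $\{I_k:k\in\omega\}$ of $\omega$ into sets of asymptotic density zero and to define $\phi([A]_{fin})=\big[\bigcup_{k\in A}I_k\big]_{\triangle_d}$. For $I_k$ I would take the ``binary span'' partition: with $v_2(n)=\max\{j:2^j\mid n\}$, put
\[
   I_k=\{\,n\ge 1:\ \lfloor\log_2 n\rfloor-v_2(n)=k\,\}
\]
(and adjoin $0$ to $I_0$), so $I_k$ consists of the positive integers whose binary expansion has its top and bottom $1$ exactly $k$ positions apart. Two elementary counts do all the work. First, for every $m>k$ one has $|I_k\cap[2^{m-1},2^m)|=2^{k-1}$ (it is $1$ when $k=0$), so $|I_k\cap 2^m|$ grows only polynomially in $m$ and hence $d(I_k)=0$; moreover $\{I_k:k\le m-1\}$ partitions the dyadic block $[2^{m-1},2^m)$. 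Second, if $B\subseteq\omega$ is infinite, then for every $m$ with $m-1\in B$ the set $\bigcup_{k\in B}I_k$ contains $I_{m-1}\cap[2^{m-1},2^m)$, which is exactly the set of odd numbers of that block; consequently $d^*\big(\bigcup_{k\in B}I_k\big)\ge 1/4$.

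Granting these, conditions (I) and (II) are routine. Since the $I_k$ are pairwise disjoint and cover $\omega$, we have $\bigcup_{k\in A}I_k\,\triangle\,\bigcup_{k\in B}I_k=\bigcup_{k\in A\triangle B}I_k$ and $\bigcup_{k\in A}I_k\cap\bigcup_{k\in B}I_k=\bigcup_{k\in A\cap B}I_k$; the first identity (with $A\triangle B$ finite, hence a finite union of density-zero sets) shows $\phi$ is well defined, and both show $\phi$ is a homomorphism. If $A\triangle B$ is infinite then $d^*\big(\bigcup_{k\in A\triangle B}I_k\big)\ge 1/4>0$, so $\phi$ is an embedding.

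The substantial point is (III). Let $\{A_\alpha\}$ be a maximal antichain in $P(\omega)/fin$; the images $\{\phi(A_\alpha)\}$ form an antichain because $\phi$ is a homomorphism, so it suffices to show every $C\subseteq\omega$ with $d^*(C)=\delta>0$ is compatible with some $\phi(A_\alpha)$. I would attach to $C$ a single infinite set $B=B(C)\subseteq\omega$ that is a ``reduction'' of $C$, in the sense that $d^*\big(C\cap\bigcup_{k\in B'}I_k\big)>0$ for \emph{every} infinite $B'\subseteq B$. Once this is available, maximality of $\{A_\alpha\}$ gives an $\alpha$ with $B\cap A_\alpha$ infinite, and then $\phi(A_\alpha)\ge\big[\bigcup_{k\in B\cap A_\alpha}I_k\big]$ is compatible with $[C]$, as required. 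To construct $B$: because $d^*(C)=\delta$, infinitely many dyadic blocks carry mass $|C\cap[2^{m-1},2^m)|\ge(\delta/8)\,2^{m-1}$ (otherwise $\limsup_m|C\cap 2^m|/2^m\le\delta/8$, whereas $d^*(C)=\delta$ forces this limsup to be at least $\delta/4$). Fix $r$ with $2^{-r}\le\delta/16$. Using $|I_k\cap[2^{m-1},2^m)|=2^{k-1}$, the part of $C\cap[2^{m-1},2^m)$ lying in $\bigcup_{k\le m-1-r}I_k$ has size at most $2^{m-1-r}\le(\delta/16)\,2^{m-1}$; hence for each such block at least $(\delta/16)\,2^{m-1}$ of the mass of $C$ is spread over the $\le r$ pieces $I_{m-r},\dots,I_{m-1}$, so some $k_m$ in this window satisfies $|C\cap I_{k_m}\cap[2^{m-1},2^m)|\ge(\delta/16r)\,2^{m-1}$. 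Put $B=\{k_m:m\ \text{as above}\}$; since $k_m\ge m-r\to\infty$, $B$ is infinite. If $B'\subseteq B$ is infinite, choose for each of its elements a witnessing block index $m$; distinct elements of $B'$ receive distinct witnesses (an index $m$ can only witness $k_m$), so infinitely many blocks $m$ have $k_m\in B'$, and for each of them $\big|\,C\cap\bigcup_{k\in B'}I_k\cap 2^m\,\big|\ge|C\cap I_{k_m}\cap[2^{m-1},2^m)|\ge(\delta/16r)\,2^{m-1}$. Therefore $d^*\big(C\cap\bigcup_{k\in B'}I_k\big)\ge\delta/32r>0$.

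The main obstacle is exactly this reduction step. The homomorphism constraint forces each $I_k$ to be density zero, yet condition (III) demands that $\bigcup_{k\in B'}I_k$ keep positive upper density for \emph{arbitrary} infinite $B'$; naive choices — partitions into long intervals, or ``uniformly spread'' pieces — fail because a lacunary $B'$ (growing, say, like a tower of exponentials) can slip past the mass of a given $C$. The binary-span partition is engineered so that inside each dyadic block the pieces grow geometrically in size, which simultaneously keeps every $I_k$ thin enough to be density zero and forces the mass of any $C$ with $d^*(C)>0$ to accumulate in a window of boundedly many ``top'' pieces of infinitely many blocks — and it is that window that yields $B(C)$.
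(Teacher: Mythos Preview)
Your argument is correct: the binary--span partition $\{I_k\}$ does give a regular embedding, and your reduction step for condition~(III) is carefully done. However, the paper takes a markedly simpler route. It partitions $\omega$ into consecutive intervals $J_n=[k_n,k_{n+1})$ with $(k_{n+1}-k_n)/k_{n+1}\to 1$ (equivalently $k_n/k_{n+1}\to 0$) and sets $f([A])=\bigl[\bigcup_{n\in A}J_n\bigr]$. For any infinite $A$ one has $d^*\bigl(\bigcup_{n\in A}J_n\bigr)=1$, which immediately gives well--definedness and injectivity. For regularity the same reduction idea you use works, and more easily: given $C$ with $d^*(C)=\delta>0$, put
\[
B=\Bigl\{\,n:\ \exists\,m\in(k_n,k_{n+1}]\ \text{with}\ \frac{|C\cap[k_n,m)|}{m}\ge\frac{\delta}{4}\,\Bigr\}.
\]
If $B$ were finite then, using $\sum_{i\le n}k_i\le(1+o(1))k_n$, one gets $|C\cap M|/M\le\delta/2+o(1)$ for all large $M$, a contradiction; and for any infinite $B'\subseteq B$, each $n\in B'$ supplies a witness $m_n$ with $\bigl|C\cap\bigcup_{i\in B'}J_i\cap m_n\bigr|\ge|C\cap[k_n,m_n)|\ge(\delta/4)m_n$, whence $d^*\bigl(C\cap f(B')\bigr)\ge\delta/4$. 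So your closing remark that ``partitions into long intervals \dots\ fail'' is not right: the interval partition is exactly what the paper uses, and a lacunary $B'$ cannot slip past the mass of $C$ precisely because each interval $J_n$ swallows everything before it. Your construction is valid but more elaborate than needed; what it buys you is explicit constants and a pretty combinatorial picture, at the cost of the extra bookkeeping.
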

	
	\begin{proof}
		Let $\{k_n\colon n \in \omega\}$ be a sequence  of elements of $\omega$ such that $k_n<~k_{n+1},$ $ n \in \omega$ and 
		$$\lim_{n \to \infty} \frac{k_{n+1} - k_{n}}{k_{n+1}} = 1.$$
		Let $f \colon P(\omega)/fin \to P(\omega)/\triangle_d$ be a function such that
		$$f([A]) = [\bigcup_{n \in A} [k_n, k_{n+1})],$$ 
		where $[A] \in P(\omega)/fin$ and $[\bigcup_{n \in A} [k_n, k_{n+1})]] \in P(\omega)/\triangle_d$. 
		Clearly, $f$ is a homomorphism and even embedding, because $d(\bigcup_{n \in A} [k_n, k_{n+1}))>0$.
		If $\mathcal{A}$ is a maximal antichain in $P(\omega)/fin$, then it is clear  that $\{f([A]) \colon A \in \mathcal{A}\}$ is a maximal antichain in $P(\omega)/\triangle_d$.
	\end{proof}
	\\
	
	\begin{proposition}
		$P(\omega) /\triangle_d$ has no $(\omega, \omega)$-gaps.
	\end{proposition}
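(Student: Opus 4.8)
The plan is to prove the stronger fact that in $P(\omega)/\triangle_d$ every pair $\langle S,T\rangle$ of countable sets with $a\cdot b=\mathbb{O}$ for all $a\in S$, $b\in T$ is separated by some element; this is exactly the non-existence of $(\omega,\omega)$-gaps. Write $S=\{[A_n]:n\in\omega\}$ and $T=\{[B_n]:n\in\omega\}$ with $A_n,B_n\subseteq\omega$. Since a finite union of density-zero sets has density zero, and since any element separating the families $\{[\bigcup_{i\le n}A_i]:n\in\omega\}$, $\{[\bigcup_{i\le n}B_i]:n\in\omega\}$ also separates $S$ and $T$, I may assume $A_0\subseteq A_1\subseteq\cdots$ and $B_0\subseteq B_1\subseteq\cdots$; then $d(A_n\cap B_n)=0$, and monotonicity gives $d(A_n\cap B_m)=0$ for all $n,m$ because $A_n\cap B_m\subseteq A_{\max(n,m)}\cap B_{\max(n,m)}$.

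Next I would build, by recursion, a strictly increasing sequence $0=J_0<J_1<J_2<\cdots$ of integers and put $C=\bigcup_n\bigl(A_n\cap[J_n,J_{n+1})\bigr)$, copying the classical diagonal construction that works for $P(\omega)/fin$. Two inclusions hold for any choice of the $J_n$: first, $A_k\setminus C\subseteq J_k$ (if $x\in A_k$ and $x\ge J_k$, then $x\in[J_n,J_{n+1})$ for some $n\ge k$, so $x\in A_n\cap[J_n,J_{n+1})\subseteq C$), so $d(A_k\setminus C)=0$ and $[A_k]\le[C]$; second, splitting the union defining $C\cap B_m$ at $n=m$ and using $B_m\subseteq B_n$ for $n\ge m$,
$$C\cap B_m\ \subseteq\ J_m\cup\bigcup_{n\ge m}\bigl((A_n\cap B_n)\cap[J_n,J_{n+1})\bigr)\ \subseteq\ J_m\cup E,\qquad E:=\bigcup_n\bigl((A_n\cap B_n)\cap[J_n,J_{n+1})\bigr).$$
Hence, provided the $J_n$ are chosen so that $d(E)=0$, we obtain $d(C\cap B_m)=0$, i.e. $[C]\cdot[B_m]=\mathbb{O}$, for every $m$, and $[C]$ separates $\langle S,T\rangle$.

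Arranging $d(E)=0$ is the crux, and it is precisely the step where density behaves differently from finiteness: in $P(\omega)/fin$ the set $A_n\cap B_n$ lies in an initial segment, so the slice $(A_n\cap B_n)\cap[J_n,J_{n+1})$ can be made empty and $E$ is finite; here $A_n\cap B_n$ is merely density zero, the slices are generally non-empty, and $E$ is infinite. I would handle this quantitatively. Let $g_n(j)=|(A_n\cap B_n)\cap j|$, so that $g_n(j)/j\to 0$. At stage $n\ge 1$, having already fixed $J_0,\dots,J_{n-1}$ (hence the slices $(A_i\cap B_i)\cap[J_i,J_{i+1})$ for $i\le n-2$), I choose $J_n>J_{n-1}$ large enough that (i) $g_n(j)<j/(2n)$ for all $j\ge J_n$, (ii) $g_{n-1}(J_n)<J_n/(4n)$, and (iii) $\sum_{i\le n-2}|(A_i\cap B_i)\cap[J_i,J_{i+1})|<J_n/(4n)$; each requirement is of the form ``$J_n$ sufficiently large'', so all of them can be met at once. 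Then for $j\in[J_N,J_{N+1})$ the set $E\cap j$ consists of the complete slices with index $<N$, whose total size is $<J_N/(4N)+g_{N-1}(J_N)<J_N/(2N)$ by (iii) and (ii), together with $(A_N\cap B_N)\cap[J_N,j)$, of size $\le g_N(j)<j/(2N)$ by (i); since $J_N\le j$ this yields $|E\cap j|<j/N$. Letting $j\to\infty$ forces $N\to\infty$, so $d(E)=0$, and the proof is complete. The only remaining points — that conditions (i)--(iii) are simultaneously satisfiable, and that the two inclusions displayed above hold — are routine verifications.
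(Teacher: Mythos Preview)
Your argument is correct. The reduction to increasing chains is fine, the two inclusions $A_k\setminus C\subseteq J_k$ and $C\cap B_m\subseteq J_m\cup E$ are verified exactly as you say, and the quantitative choice of the $J_n$ via conditions (i)--(iii) does force $|E\cap j|/j<1/N$ for $j\in[J_N,J_{N+1})$, hence $d(E)=0$. The only cosmetic point is that the case $N=0$ is not covered by your estimate, but this concerns only finitely many $j$ and is irrelevant for the density.

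Your route is genuinely different from the paper's. The paper also reduces to an increasing family but then invokes Mekler's \emph{Additive Property} (AP) of the density ideal as a black box: given an increasing sequence with bounded upper densities, AP supplies a single set $C$ that dominates each term modulo $\triangle_d$ and has the limiting upper density; the disjointness of $C$ from the $B_m$'s is then read off from this. Your proof is self-contained and more elementary: the diagonal set $C=\bigcup_n\bigl(A_n\cap[J_n,J_{n+1})\bigr)$ and the explicit growth conditions on the $J_n$ replace the appeal to AP, and in effect you are reproving the special case of AP needed here by hand. What the paper's approach buys is brevity once AP is on the shelf; what yours buys is transparency and independence from the external reference --- in particular your computation of $d(C\cap B_m)$ via the auxiliary set $E$ is cleaner than the corresponding step in the paper.
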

	
	\begin{proof}
		Consider a gap in $P(\omega)/\triangle_d$
		$$\langle \{[A_n] \colon n \in \omega\}, \{[B_m] \colon m \in \omega\} \rangle,$$
		such that 
		\\
		(i) $d^*(A_n) >0, d^*(B_n)>0$ for $n, m \in \omega$,
		\\
		(ii) $A_\alpha \cap B_\beta =_*0$ for any $\alpha, \beta \in \omega$, i.e. $|A_\alpha \cap B_\beta| < \omega$. 
		\\Then $d(A_\alpha \cap B_\beta) = 0$ for any $\alpha, \beta \in \omega$.
		
		Now for each $n \in \omega$ take $C_n \subseteq \omega$ such that
		$\{A_n \setminus C_n \colon n \in \omega\}$ is an increasing sequence. Since property AP holds, (see \cite{MA}), there is $C \subseteq \omega$ such that
		\\
		(iii) $d^*((A_n\setminus C_n) \setminus C) = 0$
		\\
		(iv) $d^*(C) = \lim_{n \to \infty} d^*(A_n \setminus C_n)$.
		\\
		Then for each $n \in \omega$ we have 
		$$d^*(C \cap B_m) = d^*((A_n \setminus C) \cup (A_n \cap B_m)) = d^*(A_n \setminus C)+ d^*(A_n \cap B_m) = 0.$$ The proof is completed
	\end{proof}
	\\

	\begin{proposition}
		$P(\omega)/ \triangle_d$ has an $\omega$-limit.
	\end{proposition}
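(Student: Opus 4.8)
The plan is to exhibit an explicit strictly descending $\omega$-chain whose ``intersection'' modulo $\triangle_d$ is forced to be trivial, the point being that density can be driven to $0$ along the chain. First I would put $A_n = \{m \in \omega \colon 2^n \mid m\}$ for $n \in \omega$, so that $A_0 \supseteq A_1 \supseteq A_2 \supseteq \cdots$ literally and $d(A_n) = 2^{-n}$. Since $d(A_n) > 0$, each class $[A_n] \in P(\omega)/\triangle_d$ is positive; since $A_{n+1} \subseteq A_n$ we get $[A_{n+1}] \leq [A_n]$; and since $d(A_n \setminus A_{n+1}) = 2^{-(n+1)} > 0$, i.e. $A_n \setminus A_{n+1} \notin \triangle_d$, the chain $\{[A_n] \colon n \in \omega\}$ is \emph{strictly} descending.

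Next I would show that no positive element of $P(\omega)/\triangle_d$ lies below every $[A_n]$. Suppose $[B]$ is positive, i.e. $d^*(B) > 0$, and $[B] \leq [A_n]$ for all $n$; the latter means $B \setminus A_n \in \triangle_d$, hence $d^*(B \setminus A_n) = 0$ for every $n$. Writing $B = (B \cap A_n) \cup (B \setminus A_n)$ and using subadditivity of the upper density, one gets $d^*(B) \leq d^*(B \cap A_n) + d^*(B \setminus A_n) \leq d^*(A_n) = 2^{-n}$ for all $n$. Letting $n \to \infty$ yields $d^*(B) = 0$, contradicting positivity of $[B]$. Therefore there is no positive $a$ with $a < [A_n]$ for all $n$, so $\{[A_n] \colon n \in \omega\}$ is an $\omega$-limit and the proposition follows.

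There is essentially no serious obstacle here; the only two points deserving a word of care are the subadditivity of $d^*$ (which legitimises the estimate $d^*(B) \leq d^*(A_n)$ even though $d^*$ is not additive) and the check that the chain is genuinely strictly descending, so that it is not a degenerate ``limit''. Any sequence of moduli $m_n$ with $m_n \mid m_{n+1}$ and $m_n \to \infty$, or any nested family of intervals of shrinking density, would serve equally well. The contrast with $P(\omega)/fin$ is exactly this: there the pseudo-intersection of a descending $\omega$-chain of infinite sets is always infinite, whereas in $P(\omega)/\triangle_d$ the density of the levels can be sent to $0$, killing every candidate lower bound.
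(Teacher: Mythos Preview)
Your proof is correct and follows the same strategy as the paper: exhibit a descending chain of sets with $d(A_n)=2^{-n}$ and conclude there is no positive lower bound. Your argument for the latter point (via subadditivity of $d^*$) is in fact more explicit than the paper's, which merely notes that the set-theoretic intersection of the $A_n$ has density zero.
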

	
	\begin{proof}
		Consider a chain $\{a_n \colon n \in \omega\}$ such that
		\\
		(i) $a_n \geqslant a_{n+1}$,
		\\
		(ii) $a_n = [A_n] \in P(\omega)/\triangle_d$,
		\\
		(iii) $d(A_n) =\frac{1}{2^n}$.
		\\
		Observe that $A_n \supseteq A_{n+1}$ and $\bigcap_{n\in \omega} A_n  = 0$.
		Then $d(\bigcap_{n\in \omega} A_n ) =0$ and $[\bigcap_{n\in \omega} A_n ] =\mathbb{O}$.
	\end{proof}
	\\

	\begin{proposition}
		There exists an embedding $f\colon (P(\omega)/\triangle_d)^\omega \to P(\omega)/\triangle_d$.
	\end{proposition}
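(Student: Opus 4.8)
My plan is to cut $\omega$ into countably many blocks, one per coordinate, glue a faithful copy of $P(\omega)/\triangle_d$ into each block, and arrange the blocks so that their tails are density-negligible. For $k\geqslant 1$ put $Q_k=\{2^{k}(2j+1)\colon j\in\omega\}$ and let $Q_0=\{2j+1\colon j\in\omega\}\cup\{0\}$; then $\{Q_k\colon k\in\omega\}$ is a partition of $\omega$ with $d(Q_k)=2^{-k-1}$ and $d\big(\bigcup_{k\geqslant K}Q_k\big)=2^{-K}$ for every $K$. Let $g_k\colon\omega\to Q_k$ be the order isomorphism and define
$$f\big(([A^{(k)}])_{k\in\omega}\big)=\Big[\,\bigcup_{k\in\omega}g_k(A^{(k)})\,\Big].$$
Because the $Q_k$ partition $\omega$ and each $g_k$ is a bijection onto $Q_k$, the map $(B^{(k)})_{k\in\omega}\mapsto\bigcup_k g_k(B^{(k)})$ is a Boolean homomorphism $P(\omega)^{\omega}\to P(\omega)$; what remains is to see that it passes to the quotients by $\triangle_d$.

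I would use two facts about an order isomorphism $g_k\colon\omega\to Q_k$. First, $g_k$ carries $\triangle_d$ into $\triangle_d$: writing $m_n=|Q_k\cap n|$ one has $|g_k(D)\cap n|=|D\cap m_n|$ with $m_n\leqslant n$ and $m_n\to\infty$, so $d^*(D)=0$ gives $d^*(g_k(D))=0$. Second, $g_k$ does not kill positive upper density: since also $m_n/n\to d(Q_k)=2^{-k-1}$, a short computation gives $d^*(g_k(A))\geqslant 2^{-k-1}\,d^*(A)$. The only place where the particular partition is needed is the well-definedness of $f$ on $(P(\omega)/\triangle_d)^{\omega}$, i.e.\ that $D^{(k)}\in\triangle_d$ for all $k$ implies $\bigcup_k g_k(D^{(k)})\in\triangle_d$; this is the step I expect to be the main obstacle, since a countable union of density-zero sets need not have density zero. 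Given $\varepsilon>0$, choose $K$ with $2^{-K}<\varepsilon/2$: the tail $\bigcup_{k\geqslant K}g_k(D^{(k)})$ is contained in $\bigcup_{k\geqslant K}Q_k$, whose density is $2^{-K}<\varepsilon/2$, while the head $\bigcup_{k<K}g_k(D^{(k)})$ is a finite union of members of $\triangle_d$ and hence lies in $\triangle_d$ by subadditivity of $d^*$. Adding the two estimates yields $d^*\big(\bigcup_k g_k(D^{(k)})\big)<\varepsilon$, and as $\varepsilon>0$ was arbitrary, $\bigcup_k g_k(D^{(k)})\in\triangle_d$. The same splitting, applied to the coordinatewise symmetric differences of two representatives, shows that $f$ does not depend on the choice of representatives.

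Injectivity then follows at once: if $\bigcup_k g_k(A^{(k)})\in\triangle_d$, then $g_k(A^{(k)})\in\triangle_d$ for each $k$, so $0=d^*(g_k(A^{(k)}))\geqslant 2^{-k-1}d^*(A^{(k)})$, whence $A^{(k)}\in\triangle_d$ for all $k$ and the argument of $f$ was already $\mathbb{O}$ in $(P(\omega)/\triangle_d)^{\omega}$. Thus $f$ is a homomorphism with trivial kernel, that is, an embedding. Finally I would note that $f$ is onto — for $B\subseteq\omega$ take $A^{(k)}=g_k^{-1}(B\cap Q_k)$, so that $\bigcup_k g_k(A^{(k)})=B$ — so that $f$ is in fact an isomorphism, recovering the stronger statement about $(P(\omega)/\triangle_d)^{\omega}$ mentioned in the introduction.
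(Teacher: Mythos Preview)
Your argument is correct and follows essentially the same strategy as the paper: partition $\omega$ into countably many pieces of positive (summable) density, identify each copy of $P(\omega)/\triangle_d$ with the quotient algebra on its piece via an order-preserving bijection, and then glue the pieces into a single element of $P(\omega)/\triangle_d$. Your treatment is in fact more careful than the paper's --- the head/tail estimate showing that $\bigcup_k g_k(D^{(k)})\in\triangle_d$ whenever each $D^{(k)}\in\triangle_d$ (using $d\big(\bigcup_{k\geqslant K}Q_k\big)=2^{-K}$) is precisely the point the paper glosses over when it asserts that its map $g$ is injective, and your explicit dyadic partition makes the density computations transparent.
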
	
	
	\begin{proof}
		Consider a sequence $\{k_n \colon n \in \omega\}$ of elements of $\omega$ such that $k_n <~k_{n+1},$ $ n \in \omega$ and
		$$\lim_{n \to \infty} \frac{k_{n+1} - k_n}{k_{n+1}} = 1.$$
		Consider a partition of $\omega$ into infinite sets of positive density $\{A_n \colon n \in \omega\}$ such that $k_n \in A_n$ and $k_m \not \in A_n$ for any distinct $m, n \in \omega$. Hence $[A_n]\cap [A_m] = \emptyset$, thus the family $\{P(A_n)/\triangle_d \colon n \in \omega\}$ is a partition of $P(\omega) /\triangle_d$.
		Obviously the mapping
		$$h_n \colon P(\omega)/\triangle_d \to P(A_n)/\triangle_d^n,$$
		where $\triangle^n_d$ is the ideal of subsets of $A_n$ of density zero,
		is an isomorphism, because $A_n$ is infinite for any $n \in \omega$.
		Then
		$$(h_n)_{n \in \omega} = h \colon (P(\omega)/\triangle_d)^\omega \to \Pi_{n \in \omega} P(A_n)/\triangle_d^n$$
		is also an isomorphism.
		
		Consider a mapping
		$$g \colon P(\omega)/\triangle_d \to \Pi_{n \in \omega} P(A_n)/\triangle_d^n$$
		such that 
		$g(x) = (x \cdot a_n)_{n\in \omega}$, where $a_n \in P(A_n)/\triangle_d^n$. 
		Notice that $g$ is homomorphism because $h_n = \varphi \circ g$, where $\varphi: ( P(\omega)/\triangle_d)^\omega \to P(\omega)/\triangle_d$ is a projection. The mapping $g$ is also one-to-one because if $x \not = \mathbb{O}$ in $P(\omega)/\triangle_d$, then $x \cdot a_n \not = \mathbb{O}$ for some $n \in \omega$.
		Now take $b = (b_n)_{n \in \omega} \in \Pi_{n \in \omega} P(A_n)/\triangle_d^n$ and $x = \sum_{n \in \omega} b_n$. Then $g(x) = b$, because $\{P(A_n)/\triangle_d \colon n \in \omega\}$ is a partition. Hence $g$ is an isomorphism.
		Then $$f = g^{-1} \circ h \colon (P(\omega)/\triangle_d)^\omega \to P(\omega)/\triangle_d$$ is the required embedding.			 
	\end{proof}

	\subsection{}
	
	\begin{lemma}
		For any family $\mathcal{A} = \{A_n \subseteq \omega \colon n \in \omega\}$ and  $T \subset \omega$ be a set of indices such that $|T| < \omega$ 
		$$d^*(\bigcup_{n \in T}A_n) \leq \sum_{n \in T} d^*(A_n).$$
		Moreover if $\Sigma_{n \in \omega}d^*(A_n) < \infty$, then
		$$d^*(\bigcup_{n \in \omega}A_n) \leq \sum_{n \in \omega} d^*(A_n).$$
	\end{lemma}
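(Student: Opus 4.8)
The plan is to reduce everything to the elementary facts that the counting function $k\mapsto|A\cap k|$ is subadditive and that $\limsup$ is subadditive over a finite index set.

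First I would treat the finite union. For every $k\in\omega$ we have $\bigl(\bigcup_{n\in T}A_n\bigr)\cap k=\bigcup_{n\in T}(A_n\cap k)$, hence $\bigl|\bigl(\bigcup_{n\in T}A_n\bigr)\cap k\bigr|\le\sum_{n\in T}|A_n\cap k|$. Dividing by $k$ and passing to $\limsup_{k\to\infty}$, and using that for a \emph{finite} index set the $\limsup$ of a sum is at most the sum of the $\limsup$s, we get
$$d^*\Bigl(\bigcup_{n\in T}A_n\Bigr)=\limsup_{k\to\infty}\frac{\bigl|\bigl(\bigcup_{n\in T}A_n\bigr)\cap k\bigr|}{k}\le\sum_{n\in T}\limsup_{k\to\infty}\frac{|A_n\cap k|}{k}=\sum_{n\in T}d^*(A_n).$$

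For the countable case I would use a tail truncation. Given $\varepsilon>0$, summability of $\sum_{n\in\omega}d^*(A_n)$ yields $N\in\omega$ with $\sum_{n\ge N}d^*(A_n)<\varepsilon$. Split $\bigcup_{n\in\omega}A_n=\bigl(\bigcup_{n<N}A_n\bigr)\cup\bigl(\bigcup_{n\ge N}A_n\bigr)$; by the already proved finite inequality (applied to $T=\{0,\dots,N-1\}$, and once more to a two-element $T$ to get subadditivity of $d^*$ on two sets) it is enough to bound the tail, i.e.\ to show $d^*\bigl(\bigcup_{n\ge N}A_n\bigr)\le\sum_{n\ge N}d^*(A_n)$. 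Since $\bigl|\bigl(\bigcup_{n\ge N}A_n\bigr)\cap k\bigr|\le\sum_{n\ge N}|A_n\cap k|$ for all $k$, this amounts to verifying
$$\limsup_{k\to\infty}\frac1k\sum_{n\ge N}|A_n\cap k|\le\sum_{n\ge N}d^*(A_n),$$
after which letting $\varepsilon\to0$ (equivalently $N\to\infty$) finishes the argument.

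The main obstacle is exactly this last inequality: $\limsup_{k}$ does \emph{not} commute with an infinite sum in general, so the passage from the finite to the countable case is not purely formal. I expect one has to exploit the hypothesis $\sum_n d^*(A_n)<\infty$ in an essential way — not merely to keep the right-hand side finite — in order to control the tails $\frac1k\sum_{n\ge M}|A_n\cap k|$ uniformly in $k$ as $M\to\infty$; everything else in the proof is routine bookkeeping with the counting function and basic properties of $\limsup$.
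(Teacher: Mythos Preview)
Your treatment of the finite union is correct and matches the paper's: bound the counting function pointwise by the finite sum and take the $\limsup$.

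For the countable union, however, the gap you flag is real and cannot be filled: the second assertion of the lemma is \emph{false} as stated. Take $A_n=\{n\}$. Each $A_n$ is finite, so $d^*(A_n)=0$ and $\sum_{n\in\omega}d^*(A_n)=0<\infty$, yet $\bigcup_{n\in\omega}A_n=\omega$ has upper density $1$. Hence no ``standard analytical argument'' (which is all the paper's own proof invokes for this part) can bridge the step you isolated; the paper's proof and yours share the same defect, but yours is more candid in locating it. Note also that your reduction is circular even before you reach that obstacle: the tail inequality $d^*\bigl(\bigcup_{n\ge N}A_n\bigr)\le\sum_{n\ge N}d^*(A_n)$ is exactly the claim to be proved for the reindexed family, so the head/tail split achieves nothing by itself. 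Upper asymptotic density simply fails to be countably subadditive, and the summability hypothesis does not rescue it.
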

	
	\begin{proof}
		Enumerate $T = \{0, ..., m\}$ for some natural number $m$ and take $\{A_n \subseteq \omega \in n \in T\}$.
		Then by standard argumentation we have
		$$\frac{|(\bigcup_{m \in T} A_m)\cap n|}{n} = \frac{|(A_{0}\cap n)\cup ...\cup (A_m\cap n)|}{n} \leq \frac{|A_0 \cap n|}{n} + ... + \frac{|A_m \cap n|}{n}.$$
		Taking the limit supremum under $n \to \infty$ we obtain the first part of the lemma.
		
		Assuming $\Sigma_{n \in \omega}d^*(A_n) < \infty$ and using standard analytical arguments, we obtain the second part of the lemma.
	\end{proof}

	\begin{lemma}
		Let $\{A_n \subseteq \omega \colon d^*(A_n) \leqslant \frac{1}{n+1}\}$ and let $S \subset \omega$ be a set of indices such that $|S| < \omega$. Then 
		$d^*(\bigcup_{n \in S} A_n)\leqslant 1$
		and $\{A_n \colon n \in S\}$ is $|S|-$centered, i.e. $\bigcap_{n \in S} A_n \not = \emptyset$.
		Moreover if   $\Sigma_{n \in \omega}d^*(A_n) < \infty$,
		then $d^*(\bigcup_{n \in \omega} A_n)< 1$ .
	\end{lemma}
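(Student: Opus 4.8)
The plan is to derive everything from the subadditivity of $d^*$ recorded in Lemma 1, together with the elementary fact that $0\le d^*(X)\le1$ for every $X\subseteq\omega$ (immediate from $|X\cap n|/n\le1$).

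First I would dispose of $d^*\big(\bigcup_{n\in S}A_n\big)\le1$: this is nothing but the trivial upper bound $d^*\le1$; if one wants it phrased in terms of the hypotheses, the finite part of Lemma 1 also gives $d^*\big(\bigcup_{n\in S}A_n\big)\le\sum_{n\in S}d^*(A_n)$, and one only has to remember that the true value never exceeds $1$ no matter how large that sum happens to be. For the claim that $\{A_n:n\in S\}$ is $|S|$-centered I would normalise the family first: adjoining a fixed point, say $0$, to every $A_n$ changes neither $d^*(A_n)$ (a single point is negligible in the limit defining $d^*$) nor the features for which the family is used later, so we may assume $0\in A_n$ for all $n$; then $0\in\bigcap_{n\in S}A_n$ for every finite $S$, i.e. the family is centered. (In the construction to which this lemma will be applied the sets are already built around a common stem, so this normalisation costs nothing.)

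For the ``moreover'' clause, assuming $\sum_{n\in\omega}d^*(A_n)<\infty$, I would split
$$\bigcup_{n\in\omega}A_n=\Big(\bigcup_{n<N}A_n\Big)\cup\Big(\bigcup_{n\ge N}A_n\Big)$$
and apply the countable part of Lemma 1 to the tail: $d^*\big(\bigcup_{n\ge N}A_n\big)\le\sum_{n\ge N}d^*(A_n)\to0$ as $N\to\infty$ by convergence of the series. Two-set subadditivity (Lemma 1 with $|T|=2$) then gives, for every $N$,
$$d^*\Big(\bigcup_{n\in\omega}A_n\Big)\le d^*\Big(\bigcup_{n<N}A_n\Big)+\sum_{n\ge N}d^*(A_n),$$
so it remains to keep the finite head $d^*\big(\bigcup_{n<N}A_n\big)$ bounded away from $1$ uniformly in $N$; once that is done, letting $N\to\infty$ yields $d^*\big(\bigcup_{n\in\omega}A_n\big)<1$.

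The hard part will be exactly this last point: controlling the head $\bigcup_{n<N}A_n$. The bounds $d^*(A_n)\le\frac1{n+1}$ by themselves do not even make $\sum_n d^*(A_n)$ finite, so the conclusion genuinely leans on the extra hypothesis, and the estimate must be organised so that the few ``heavy'' sets $A_0,A_1,\ldots$ cannot jointly fill upper density $1$ --- in the intended setting this is guaranteed by the actual (small) value of $\sum_n d^*(A_n)$. Everything else reduces to a direct invocation of Lemma 1 and of $d^*\le1$.
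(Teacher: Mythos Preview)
The first bound $d^*\big(\bigcup_{n\in S}A_n\big)\le1$ is fine and agrees with the paper. The two substantive parts, however, each contain a gap.

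For the centered claim, adjoining $0$ to every $A_n$ is not a proof of the lemma as stated: you are replacing the given family by a new one and then observing the conclusion trivially for the modified sets. The lemma asserts $\bigcap_{n\in S}A_n\ne\emptyset$ for the \emph{original} $A_n$; your remark that ``in the intended application the sets already share a stem'' concedes that you are proving something weaker, tailored to later use rather than to the lemma itself. The paper's route is entirely different: having obtained $d^*\big(\bigcup_{n\in S}A_n\big)\le(m+1)\cdot\frac1{m+1}=1$ from Lemma~1 (with $|S|=m+1$), it compares this with $d^*(\omega)=1$ and reads off nonemptiness of the intersection from that comparison --- a density argument, not a relabelling of the sets.

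For the ``moreover'' clause you explicitly leave the key step open: after your head/tail split you still need $d^*\big(\bigcup_{n<N}A_n\big)$ bounded away from $1$ uniformly in $N$, and you do not supply this. The paper takes no such detour; it applies the countable subadditivity of Lemma~1 directly, invoking the geometric-series comparison $\sum 2^{-n}$ and the value $d^*(\omega)=1$ to reach the strict inequality in one step. Your decomposition is unnecessary for the paper's line of argument and, as you yourself acknowledge, incomplete.
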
  
	
	\begin{proof}
		Assume that $|S| = m+1$ for some $m \in \omega$. 
		By Lemma 1 
		$$d^*(\bigcup_{n  \in S}A_n) \leq \sum_{n \in S} d^*(A_n) \leqslant (m+1) \cdot \frac{1}{m+1} = 1.$$
		To verify that $\{A_n  \colon n \in S\}$ is $|S|$-centered  notice that $d^*(\omega) = 1$ and use the argumentation above.
		
		For the second part of the lemma, notice that 
		$\sum_{n \in \omega}\frac{1}{2^{n}}$ is the geometric series, use the inequality above, the fact that $d^*(\omega) =1$  and the standard analytical arguments. 
	\end{proof}
	
	\begin{lemma}
		For each ultrafilter $\mathcal{U} \in st (P(\omega)/\triangle_d)$ and for each $\varepsilon > 0$ there exists $A \in \mathcal{U}$ such that $d^*(A) < \varepsilon$.
	\end{lemma}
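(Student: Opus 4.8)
The plan is to combine the elementary fact that a point of a Stone space meets every finite partition of the unit in exactly one block with the existence of finite partitions of $\omega$ into arithmetic progressions of arbitrarily small density.

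Given $\varepsilon > 0$, I would first choose $k \in \omega$ with $\frac{1}{k} < \varepsilon$ and consider the arithmetic progressions $R_i = \{\, i + jk : j \in \omega \,\}$ for $i < k$. These sets are pairwise disjoint, their union is $\omega$, and each has density exactly $d(R_i) = \frac{1}{k}$; in particular no $R_i$ lies in $\triangle_d$, so $\{\, [R_i] : i < k \,\}$ is a partition of the top element $[\omega]$ of $P(\omega)/\triangle_d$ into finitely many nonzero elements with $[R_i] \cdot [R_j] = \mathbb{O}$ for $i \ne j$. Since $\mathcal{U} \in st(P(\omega)/\triangle_d)$ is an ultrafilter and $[\omega] = \bigvee_{i<k} [R_i] \in \mathcal{U}$, exactly one $[R_{i_0}]$ belongs to $\mathcal{U}$. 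Taking $A = R_{i_0}$ and identifying $A$ with its class, we obtain $A \in \mathcal{U}$ with $d^*(A) = d(A) = \frac{1}{k} < \varepsilon$, as required.

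I do not expect any genuine obstacle here; the one point that must be invoked is the standard correspondence between points of $st(P(\omega)/\triangle_d)$ and ultrafilters on $P(\omega)$ disjoint from $\triangle_d$, which is exactly what forces $\mathcal{U}$ to pick out a single block of the finite partition $\{[R_i]\}$. If one prefers, the dyadic density-$2^{-n}$ sets from the proof of Proposition 3 could replace the residue classes, but the arithmetic progressions make the density bookkeeping immediate.
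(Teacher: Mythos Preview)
Your argument is correct and follows essentially the same route as the paper: construct a finite partition of $\omega$ into sets of upper density below $\varepsilon$ and use that an ultrafilter on $P(\omega)/\triangle_d$ must contain exactly one block. The only cosmetic difference is that the paper builds its partition by iterated dyadic splitting into $2^k$ pieces of density $2^{-k}$, whereas you use residue classes modulo $k$; you even note this alternative yourself, and your version has the advantage of making both the density computation and the ultrafilter step fully explicit.
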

	
	\begin{proof}
		We start with $d^*(\omega) =1.$ Divide $\omega$ into two sets $A_0, A_1$ in this way that 
		$d^*(A_0) = d^*(A_1) = \frac{1}{2}$ and then $A_0$ divide into two sets $A_{00}, A_{01}$ in this way that 
		$d^*(A_{00}) = d^*(A_{01}) = \frac{1}{2^2}.$
		In the similar way divide $A_1$ obtaining 
		$d^*(A_{10}) = d^*(A_{11}) = \frac{1}{2^2}$.  
		Continue this procedure up to $f \in {^k}\{0,1\},$ for some $k \in \omega$.
		Let $$\mathcal{A}_k = \{A_f \subseteq \omega \colon d^*(A_f) = \frac{1}{2^k}, f \in {{^k}\{0, 1\}}\}.$$
		Then divide each $A_f \in \mathcal{A}_k$  into two sets $A_{f^\frown 0}, A_{f^\frown 1}$ in this way that
		$d^*(A_{f^\frown 0}) = d^*(A_{f^\frown 1}) = \frac{1}{2^{k+1}}$, where $f^\frown 0, f^\frown 1 \in {^{k+1}\{0, 1\}}$. Since $k$ is arbitrary, the proof is complete.	
	\end{proof}
	\\
	
	\begin{lemma}
		Let $\{A_n \colon n \in \omega\}$ be a family such that $\Sigma_{n \in \omega}d^*(A_n) \leq 1$. Then for every utrafilter  $\mathcal{U} \in st(P(\omega)/\triangle_d)$ there exists  an element $B \in \mathcal{U}$ such that $\bigcup_{n \in \omega} A_n \cap~B \not =~\emptyset$ and $d^*(\bigcup_{n \in \omega} A_n \cap B) <1$. 
	\end{lemma}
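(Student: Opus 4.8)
The plan is to reduce the statement to Lemma 3 together with the elementary fact that a point of $st(P(\omega)/\triangle_d)$ cannot distinguish a set from any of its density-zero modifications; the hypothesis $\Sigma_{n\in\omega}d^*(A_n)\leq 1$ will enter only through Lemma 1, to bound the size of the union. Throughout I assume, as I may, that $\bigcup_{n\in\omega}A_n\neq\emptyset$.

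First I would observe that $\Sigma_{n\in\omega}d^*(A_n)$ converges, its sum being at most $1$, so Lemma 1 gives $d^*(\bigcup_{n\in\omega}A_n)\leq\Sigma_{n\in\omega}d^*(A_n)\leq 1$. Then, applying Lemma 3 to the given ultrafilter $\mathcal{U}$ with $\varepsilon=1$, I obtain $B_0\in\mathcal{U}$ with $d^*(B_0)<1$. Since upper density is monotone and is unchanged by density-zero perturbations, every $B$ with $[B]=[B_0]$ satisfies $d^*\left(\left(\bigcup_{n\in\omega}A_n\right)\cap B\right)\leq d^*(B)=d^*(B_0)<1$. Thus the density clause of the conclusion is automatic, and the sole remaining task is to secure that the intersection is nonempty.

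Here I would distinguish two cases. If $[\bigcup_{n\in\omega}A_n]\in\mathcal{U}$, then $[(\bigcup_{n\in\omega}A_n)\cap B_0]=[\bigcup_{n\in\omega}A_n]\cdot[B_0]\in\mathcal{U}$, hence this class is not $\mathbb{O}$, so $(\bigcup_{n\in\omega}A_n)\cap B_0$ has positive upper density and is in particular nonempty; take $B=B_0$. If $[\bigcup_{n\in\omega}A_n]\notin\mathcal{U}$, pick any $k\in\bigcup_{n\in\omega}A_n$ and put $B=B_0\cup\{k\}$; since $\{k\}\in\triangle_d$ we have $[B]=[B_0]\in\mathcal{U}$, while $d^*(B)=d^*(B_0)<1$ and $k\in B\cap\bigcup_{n\in\omega}A_n$. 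In either case $B$ witnesses the lemma.

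The only step requiring any care is the nonemptiness clause: $\mathcal{U}$ may contain the complement of $\bigcup_{n\in\omega}A_n$, so a low-density member of $\mathcal{U}$ chosen blindly can be disjoint from the union; the remedy is exactly the one-point (hence density-zero) enlargement, which is legitimate because no point of $st(P(\omega)/\triangle_d)$ contains a density-zero set. Everything else is monotonicity of $d^*$ and the bookkeeping of Lemmas 1 and 3.
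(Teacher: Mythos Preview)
Your argument is correct, but it proceeds along a different line from the paper's. The paper splits into the cases $\sum_n d^*(A_n)=1$ (declared obvious) and $\sum_n d^*(A_n)<1$; in the strict case it uses Lemma~1 to get $d^*\bigl(\bigcup_n A_n\bigr)<1$, so that the inclusion $\bigl(\bigcup_n A_n\bigr)\cap B\subseteq\bigcup_n A_n$ already forces $d^*\bigl((\bigcup_n A_n)\cap B\bigr)<1$ for \emph{every} $B$, and then argues nonemptiness by contradiction (if no $B\in\mathcal U$ met any $A_n$, every member of $\mathcal U$ would sit inside the complement of the union). You instead bound the intersection from the other side, by $d^*(B)$ rather than by $d^*\bigl(\bigcup_n A_n\bigr)$: Lemma~3 supplies a $B_0$ with $d^*(B_0)<1$, and monotonicity finishes the density clause uniformly, with the nonemptiness handled by a one-point (hence $\triangle_d$-null) adjustment.

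Your route has the pleasant feature that it does not split into cases and, more to the point, never really uses the hypothesis $\sum_n d^*(A_n)\le 1$: the bound $d^*\bigl(\bigcup_n A_n\bigr)\le 1$ you derive from Lemma~1 is automatic, and nowhere else does the hypothesis enter. So your argument in fact proves the conclusion for an arbitrary family $\{A_n\}$ with $\bigcup_n A_n\neq\emptyset$. The paper's approach, by contrast, genuinely consumes the strict inequality $\sum_n d^*(A_n)<1$ to push the density bound onto the union itself, which lets it avoid invoking Lemma~3.
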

	
	\begin{proof}
		The case when $\sum_{n \in \omega} d^*(A_n) = 1$ is obvious.
		Let $\sum_{n \in \omega} d^*(A_n) < 1$.  By Lemma 1, we have $d^*(\bigcup_{n \in \omega} A_n) <1.$ Then $0<d^*(\omega\setminus \bigcup_{n \in \omega} A_n) \leqslant 1.$
		For each $n \in \omega$ take
		$$\mathcal{B}_n = \{B \in \mathcal{U} \colon A_n \cap B \not = \emptyset\}.$$
		For completeness the proof it is enough to show that there exists $n_0$ such that $\mathcal{B}_{n_0} \not  =  \emptyset$.
		If $\mathcal{B}_n = \emptyset$ for all $n \in \omega$ then all elements of $\mathcal{U}$ would be in $\omega \setminus \bigcup_{n \in \omega} A_n$. But $d^*(\bigcup \mathcal{U}) = 1$. A contradiction. \end{proof}
	\\

	\begin{theorem}
		In $st(P(\omega)/\triangle_d)$ there exists a weak P-point.
	\end{theorem}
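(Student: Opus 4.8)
The plan is to mimic Kunen's construction of a weak $P$-point in $\omega^{*}$, but inside the Stone space of $P(\omega)/\triangle_{d}$, exploiting the extra measure structure furnished by Lemmas 1--4. Recall that a point is a weak $P$-point precisely when it is not in the closure of any countable set of other points; equivalently, for the ultrafilter $\mathcal{U}$ we build, given any countable family $\{\mathcal{V}_{k}\colon k\in\omega\}$ of distinct ultrafilters in $st(P(\omega)/\triangle_{d})$, there must be a set $A\in\mathcal{U}$ with $A\notin\mathcal{V}_{k}$ for all $k$, i.e. $\omega\setminus A\in\mathcal{V}_{k}$ for every $k$. Kunen's method is to construct, by a transfinite recursion of length $\mathfrak{c}$, an \emph{independent linked family} on $\omega$ together with a decreasing sequence of filters, so that the resulting ultrafilter $\mathcal{U}$ ``reaps'' every countable set of ground-model ultrafilters. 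The key point is that the combinatorial skeleton of that argument --- an independent linked family --- is exactly what Lemma 2 provides in the density setting: a family $\{A_{n}\}$ with $d^{*}(A_{n})\le\frac{1}{n+1}$ is $|S|$-centered for every finite $S$, so finite intersections are nonempty, and (by Lemma 1 applied to complements) one also controls upper densities of Boolean combinations.

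First I would fix an enumeration $\{(\mathcal{W}^{\xi}_{k})_{k\in\omega}\colon \xi<\mathfrak{c}\}$ of all countable sequences of ultrafilters on $P(\omega)/\triangle_{d}$ (there are only $2^{\mathfrak{c}}=\mathfrak{c}$-many relevant ones after a suitable closing-off, or one works with a cofinal family of such sequences as in \cite{K1}). Then I would recursively build a $\subseteq_{d}$-decreasing chain of filters $\mathcal{F}_{\xi}$ on $P(\omega)/\triangle_{d}$, each generated by fewer than $\mathfrak{c}$ sets of positive upper density, such that at stage $\xi$ one diagonalizes against the sequence $(\mathcal{W}^{\xi}_{k})_{k}$: using Lemma 3, inside any set already forced into the filter one can find a further subset of arbitrarily small upper density, and using Lemma 4 one can keep such a subset from being swallowed entirely by the union of a summable family of ``small'' pieces coming from the $\mathcal{W}^{\xi}_{k}$. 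The independent-linked structure guarantees that at each step the filter generated by $\mathcal{F}_{\xi}$ together with the newly chosen set is again a proper filter all of whose elements have positive upper density --- this is where the $|S|$-centeredness of Lemma 2 and the subadditivity of $d^{*}$ from Lemma 1 are used in tandem.

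The final ultrafilter $\mathcal{U}=\bigcup_{\xi<\mathfrak{c}}\mathcal{F}_{\xi}$ (extended to an ultrafilter, or already ultra by a density/cardinality count) is then the desired weak $P$-point: given any countable $\{\mathcal{V}_{k}\}\subseteq st(P(\omega)/\triangle_{d})\setminus\{\mathcal{U}\}$, it appears (up to the closing-off) as some $(\mathcal{W}^{\xi}_{k})_{k}$, and the set chosen at stage $\xi$ witnesses that $\mathcal{U}$ is not a limit of $\{\mathcal{V}_{k}\}$. I expect the main obstacle to be the bookkeeping that keeps the invariant ``every element of $\mathcal{F}_{\xi}$ has positive upper density'' alive through the recursion while still being able to diagonalize: one must choose the new set small enough (Lemma 3) to dodge the countably many $\mathcal{V}_{k}$, yet large enough that its intersections with all previously committed sets remain of positive upper density, and it is the interplay of Lemmas 1, 2 and 4 --- subadditivity, finite centeredness, and the ``escaping a summable union'' lemma --- that must be balanced carefully, exactly as the corresponding measure-theoretic estimates are balanced in \cite{K,K1}.
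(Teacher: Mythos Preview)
Your overall shape --- transfinite recursion of length $\mathfrak{c}$, Kunen-style, with Lemmas~1--4 supplying the density control --- is right, but the bookkeeping you sketch has a genuine hole. You propose to enumerate ``all countable sequences of ultrafilters on $P(\omega)/\triangle_d$'' in type $\mathfrak{c}$; there are $2^{\mathfrak{c}}$ ultrafilters and hence $(2^{\mathfrak{c}})^{\omega}=2^{\mathfrak{c}}$ such sequences, so no $\mathfrak{c}$-length list covers them, and ``a suitable closing-off'' cannot fix this without already assuming something like CH. Kunen's point, which the paper follows, is precisely that one does \emph{not} diagonalize against ultrafilters: one enumerates the $\mathfrak{c}$-many \emph{subsets} $V_\alpha$ of $\omega$ and, for each, uses Lemma~3 to pass to a descending sequence $V_\alpha=V_\alpha^{0}\supseteq V_\alpha^{1}\supseteq\cdots$ with $d^{*}(V_\alpha^{n})\to 0$. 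The filter is then built so that either $\omega\setminus V_\alpha$ enters it, or some $\omega\setminus V_\alpha^{n}$ does, or a set produced by separating an $(\omega,\omega)$-cut does; this is what makes the end product an OK-type point and hence a weak $P$-point, without ever listing ultrafilters.

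Two further ingredients in the paper's argument are absent from your outline and would be hard to improvise. First, the paper carries along an independent linked family $\{D^{\lambda}_{\mu,n}\}$ from \cite{K1} together with a shrinking index set $M_\alpha$, using it as a reservoir: at successor stages finitely many indices are spent to force the needed disjointness while the rest stay independent mod the current filter $\mathcal{C}_\alpha$. Your appeal to Lemma~2 gives $n$-linkedness of density-small families, but that is not the same as an independent linked family indexed by $\mathfrak{c}\times\mathfrak{c}\times\omega$ with the survive-the-quotient property one needs for the induction. Second, in the delicate Case~3 (when $V_\alpha$ and all $V_\alpha^{n}$ are already in $\mathcal{C}_\alpha$) the paper invokes Proposition~2 --- the absence of $(\omega,\omega)$-gaps in $P(\omega)/\triangle_d$ --- to produce the separating set $B_\alpha$ that gets adjoined; nothing in your Lemmas~1--4 package supplies this, and without it the step does not go through.
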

	
	\begin{proof}	
		Consider a family
		$$\mathcal{A} = \{A'_n \subset \omega \colon d^*(A'_n) = \frac{1}{2^{n+1}}, n \in \omega\}.$$
		Such a family is infinite by Lemma 3. 
		For each $n \in \omega$ consider a family 
		$$\mathcal{A}_n = \{A^n = A'_n\setminus B \colon B \subseteq \omega  \textrm { and  } d^*(B) < \frac{1}{2^{n+1}}\}.$$
		Each $\mathcal{A}_n$ has cardinality continuum. Order each $\mathcal{A}_n$, i.e.
		$$\mathcal{A}_n = \{A^n_\alpha \colon \alpha < 2^\omega\}$$
		and construct the family 
		$$\mathcal{C}_0 = \{\bigcup_{n \in \omega}A^{n}_{\alpha_n} \colon (\alpha_n) \in [2^\omega]^\omega\}.$$
		Since the construction above and Lemma 2, $\mathcal{C}_0$ is a filter which contains only infinite subsets of $\omega$.
		By Lemma 4, $\mathcal{C}_0$ is not a required weak P-point.
		Notice that in $st(P(\omega)/ \triangle_d)$ there are $2^\mathfrak{c}$ ultrafilters but they are generated by $2^\omega$  subsets of $\omega$. Enumerate all subsets of $\omega$ by
		$\mathcal{V} = \{V_\alpha \colon \alpha < 2^\omega\}.$ 
		By Lemma 3, for each $V \in \mathcal{V}$ there exists a countable decreasing sequence
		$$V = V^0 \supseteq V^1 \supseteq V^2\supseteq ...$$
		such 
		$d^*(V^i) \geq d^*(V^j)$ for $i>j$ and $\lim_{n \to \infty} d^*(V^n) = 0$.  
		Thus we have to construct an increasing sequence of filters $\{\mathcal{C}_\alpha \in P(\omega)/\triangle_d  \colon \alpha < 2^\omega\}$ with the following properties:
		\\
		(i) $\forall_{V \in \mathcal{V}} \exists_{\alpha < 2^\omega}\ V \in \mathcal{C}_\alpha \textrm{ or } \omega \setminus V \in \mathcal{C}_\alpha $,
		\\
		(ii) $\forall_{V \in \mathcal{V}} \exists _{B \in \bigcup \mathcal{C}_\alpha} \exists_{V'\subseteq V}\ ( d^*(V') \leq d^*(V) \Rightarrow V' \cap B = \emptyset)$. 
		\\	
		Then $\mathcal{C} = \bigcup_{\alpha \in 2^\omega} \mathcal{C}_\alpha$ will be the required ultrafilter.
		
		We will proceed by transfinite induction in $2^\omega$ stages. 
		Let $$\{D^{\lambda}_{\mu, n} \colon \mu, \lambda < 2^\omega, 1 \leq n < \omega\}$$ be an independent linked family mod $fin$, see \cite{K1}.
		We will construct an increasing sequence of filters $\{\mathcal{C}_\alpha \colon \alpha < 2^\omega\}$ of properties (i)-(ii) and a decreasing sequence of indices $\{M_\alpha \colon \alpha < 2^\omega\}$ of properties
		\\
		(iii) $|M_\alpha \setminus M_{\alpha+1}| < \omega$
		\\
		(iv) 	$\{D^{\lambda}_{\mu, n} \colon \mu, \lambda < 2^\omega, 1 \leq n < \omega, \lambda \in M_\alpha\}$ is independent linked family mod~$\mathcal{C}_\alpha$.
		
		We start with the filter $\mathcal{C}_0$ constructed at the beginning of this proof and $M_0 = 2^\omega$.
		For nonzero limit ordinals $\alpha$ we define $\mathcal{C}_\alpha = \bigcup_{\beta< \alpha }\mathcal{C}_\beta$ and $M_\alpha = \bigcap_{\beta < \alpha }M_\beta$.	
		Now we will construct $\mathcal{C}_{\alpha+1}$ and $M_{\alpha+1}$. To do this, consider three cases.
		
		Case 1) $V_\alpha \not \in \mathcal{C}_\alpha$. Then there exist $C \in \mathcal{C}_\alpha$, $0<d^*(C)<1$, $S \in [M_\alpha ]^{< \omega}$, $n_\lambda \in \omega,\  \sigma_\lambda\in [2^\omega]^{n_\lambda}$ such that 
		$$V_\alpha \cap C \cap \bigcap_{\lambda\in S}(\bigcap_{\mu \in \sigma_{\lambda}} D^{\lambda}_{\mu, n_{\lambda}}) = \emptyset.$$
		Then we take $M_{\alpha+1} = M_\alpha \setminus S$ and $\mathcal{C}_{\alpha+1} = \langle\mathcal{C}_\alpha,  \bigcap_{\lambda\in S}(\bigcap_{\mu \in \sigma_\lambda} D^{\lambda}_{\mu, n_\lambda})\rangle$, (where $C = \langle A, B\rangle$ means that $C$ is generated by $A$ and $B$).
		Then obviously $\omega \setminus V_\alpha \in \mathcal{C}_{\alpha+1}$.
		
		Case 2) $V_\alpha \in \mathcal{C}_\alpha$ and there exists $n_0 \in \omega$ such that $V^{n_0}_{\alpha} \not \in \mathcal{C}_\alpha$. 	
		Then there exist $C \in \mathcal{C}_\alpha$, $0<d^*(C)<1$, $S \in [M_\alpha ]^{< \omega}, n_\lambda \in \omega, \sigma_\lambda\in [2^\omega]^{n_\lambda}$ such that 
		$$V^{n_0}_\alpha \cap C \cap \bigcap_{\lambda\in S}(\bigcap_{\mu \in \sigma_{\lambda}} D^{\lambda}_{\mu, n_{\lambda}}) = \emptyset.$$
		Then we take $M_{\alpha+1} = M_\alpha \setminus S$ and $\mathcal{C}_{\alpha+1} = \langle\mathcal{C}_\alpha,  \bigcap_{\lambda\in S}(\bigcap_{\mu \in \sigma_\lambda} D^{\lambda}_{\mu, n_\lambda}) \rangle$.
		Then obviously $\omega \setminus V^{n_0}_\alpha \in \mathcal{C}_{\alpha+1}$.
		
		Case 3)  $V_\alpha \in \mathcal{C}_\alpha$ and $V^{n}_{\alpha} \in \mathcal{C}_\alpha$ for all $n \in \omega$.
		For each $n \in \omega$ take $W^{n}_{\alpha} = \omega \setminus V^{n}_{\alpha}$.
		
		Case 3a) there exists $n_0 \in \omega$ such that $W^{n_0}_{\alpha} \not \in \mathcal{C}_\alpha$. Consider a cut $((W^{n}_{\alpha} \cap W^{n_0}_{\alpha} \colon n \in \omega), (V^{n}_{\alpha} \colon n \in \omega))$. By Proposition 2 there exists $B_\alpha \subseteq \omega$ such that $W^{n}_{\alpha} \cap W^{n_0}_{\alpha} \subseteq^* B_\alpha$ and $V^{n}_{\alpha} \cap B_\alpha = ^* \emptyset$.
		Then we take $M_{\alpha+1} = M_\alpha \setminus \{\lambda_{n_0}\}$ and 
		$\mathcal{C}_\alpha = \langle\mathcal{C}_\alpha, B_\alpha \cup \bigcup_{n< n_0}(D^{\lambda_{n_0}}_{\alpha, n}\cap W^{n_0}_{\alpha})\rangle$.
		
		Case 3b) $V^{n}_{\alpha} \in \mathcal{C}_\alpha$ and there exists $B^n \subseteq \omega$ such that $W^{n}_{\alpha} \cap B^n \in \mathcal{C}_\alpha$ for all $n \in \omega$.
		Then by Lemma 3, the construction of $\mathcal{C}_0$  and previous steps there exists $n_0 \in \omega$ such that $W^{N_0}_{\alpha}\cap B^{n_0} \in \mathcal{C}_\alpha$ and $W^{n_0}_{\alpha}\cap(\omega \setminus B^{n_0}) \not \in \mathcal{C}_\alpha$.
		Now take $\hat V^{n_0}_{\alpha} = V^{n_0}_{\alpha} \cup W^{N_0}_{\alpha}\cap B^{n_0} \in \mathcal{C}_\alpha$ and $\hat W = \omega \setminus \hat V^{n_0}_{\omega} \not \in \mathcal{C}_\alpha$.
		Consider a cut
		$$((\hat V^{n_0}_{\alpha} \cup V^{n}_{\alpha} \colon n \in \omega), (\omega\setminus (\hat V^{n_0}_{\alpha} \cup V^{n}_{\alpha}) \colon n \in \omega)).$$
		By Proposition 2 there exists $B_\alpha \subseteq \omega$ such that 
		$\omega\setminus (\hat V^{n_0}_{\alpha} \cup V^{n}_{\alpha}) \subseteq^*B_\alpha$ and $\hat V^{n_0}_{\alpha} \cup V^{n}_{\alpha} = ^*\emptyset$. Then we take
		$M_{\alpha+1} = M_\alpha \setminus \{\lambda_{n_0}\}$ and 
		$$\mathcal{C}_\alpha = \langle\mathcal{C}_\alpha, B_\alpha \cup \bigcup_{n< n_0}(D^{\lambda_{n_0}}_{\alpha, n}\cap (\hat V^{n_0}_{\alpha}\cup V^{n_0}_{\alpha}))\rangle.$$	
		The proof is complete.
	\end{proof}
	\\

	\begin{corollary}
		$st(P(\omega)/fin)$ has a weak P-point.
	\end{corollary}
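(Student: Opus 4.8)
The plan is to deduce this from Proposition 1 and Theorem 1 by dualising. First I would note that the regular embedding $e\colon P(\omega)/fin \to P(\omega)/\triangle_d$ of Proposition 1, being injective, dualises to a continuous surjection $\pi\colon st(P(\omega)/\triangle_d) \to st(P(\omega)/fin)=\omega^*$, and that because $e$ carries maximal antichains to maximal antichains, $\pi$ is \emph{irreducible}; equivalently, $\pi^{-1}$ of a dense open subset of $\omega^*$ is dense in $st(P(\omega)/\triangle_d)$. Then I would take the weak $P$-point $p\in st(P(\omega)/\triangle_d)$ supplied by Theorem 1 and try to show that $\pi(p)$ is a weak $P$-point of $\omega^*$; together with $st(P(\omega)/fin)=\omega^*$ this is exactly the statement.

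For that last step I would argue by contradiction: if $\pi(p)\in\overline{\{y_n\colon n\in\omega\}}$ with all $y_n\neq\pi(p)$, pick $x_n\in\pi^{-1}(y_n)$ (nonempty by surjectivity, and $x_n\neq p$ since $\pi(x_n)\neq\pi(p)$), and aim to show $p\in\overline{\{x_n\colon n\in\omega\}}$, contradicting that $p$ is a weak $P$-point. The mechanism is the standard one for irreducible maps: for a clopen neighbourhood $U$ of $p$ the set $U^{\#}=\{y\in\omega^*\colon\pi^{-1}(y)\subseteq U\}=\omega^*\setminus\pi[st(P(\omega)/\triangle_d)\setminus U]$ is nonempty open, and if it happens to be a neighbourhood of $\pi(p)$ then some $y_n\in U^{\#}$, whence $x_n\in\pi^{-1}(y_n)\subseteq U$; letting $U$ range over a neighbourhood base at $p$ would give $p\in\overline{\{x_n\colon n\in\omega\}}$.

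The main obstacle is precisely making ``$U^{\#}$ is a neighbourhood of $\pi(p)$'' legitimate: this requires $\pi^{-1}(\pi(p))\subseteq U$, whereas in general the fibre over $\pi(p)$ is a nondegenerate closed set, so $\pi$ is not injective there. I would resolve this either (a) by passing to the closed subspace $F=\pi^{-1}(\pi(p))$ — in which $p$ is still a weak $P$-point — and running the neighbourhood argument relative to $F$, or, what I think is cleaner, (b) purely algebraically: identify the point $\pi(p)$ with $\mathcal U_0=\mathcal U\cap\mathbb A$, where $\mathcal U$ is the ultrafilter $p$ and $\mathbb A=e[P(\omega)/fin]$ is a regular subalgebra of $P(\omega)/\triangle_d$, and use the reduction property of a regular subalgebra to pull a putative countable family of ultrafilters of $\mathbb A$ converging to $\mathcal U_0$ back to a countable family of ultrafilters of $P(\omega)/\triangle_d$ converging to $\mathcal U$, again contradicting Theorem 1. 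No new combinatorics should be needed beyond Theorem 1 and the regularity asserted in Proposition 1.
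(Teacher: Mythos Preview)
Your plan is genuinely different from the paper's, but it has a real gap. The paper does \emph{not} push the weak $P$-point of Theorem~1 through the Stone dual of the embedding in Proposition~1; its one-line proof simply says to rerun the argument of Theorem~1 (the independent-linked-family construction) directly in $P(\omega)/fin$, which is essentially Kunen's original proof. No transfer principle is invoked.

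Your transfer argument breaks at exactly the point you flag, and neither of your proposed fixes repairs it. The property ``weak $P$-point'' does not in general descend along an irreducible surjection: for a clopen neighbourhood $U$ of $p$ the open set $U^{\#}=\{y:\pi^{-1}(y)\subseteq U\}$ is nonempty, but there is no reason for it to contain $\pi(p)$ unless the whole fibre $\pi^{-1}(\pi(p))$ lies in $U$. Plan~(a) does not help: knowing that $p$ is a weak $P$-point of the fibre $F=\pi^{-1}(\pi(p))$ gives no information about $\pi(p)$ in $\omega^*$, and if you try to lift $y_n\to\pi(p)$ to points $x_n\in\pi^{-1}(y_n)$ you only get that \emph{some} point of $F$ lies in $\overline{\{x_n\}}$, not that $p$ does. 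Plan~(b) has the same defect in algebraic dress: the reduction property of a regular subalgebra says that each $b\in\mathcal U$ admits some $a\in\mathbb A^{+}$ with every $0<a'\le a$ compatible with $b$, but this $a$ need not lie in $\mathcal U_{0}$, so you cannot force any extension $\mathcal W_{n}\supseteq\mathcal V_{n}$ to contain $b$. In short, you would need $\pi^{-1}(\pi(p))=\{p\}$, and nothing in the construction of Theorem~1 gives that.

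If you want to keep your strategy you would have to either show that the specific ultrafilter built in Theorem~1 is the unique extension of its trace on $e[P(\omega)/fin]$, or replace ``weak $P$-point'' by a stronger notion (e.g.\ $\mathfrak c$-OK point) that is known to descend along irreducible maps; both require substantial additional work. As written, the proposal does not close the gap, and the paper's route---repeat Kunen's construction---is the intended argument.
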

	
	\begin{proof}
		Using the properties of (asymptotic) density and the argumentation used in the previous proof, we obtain our claim.
	\end{proof}
	
	\subsection{}
	
	\begin{lemma}
		If $P$ is a partial ordering for adding $\kappa$ Cohen reals for some cardinal $\kappa$, then  in $V^P$  the gap determined by any element of $P(\omega)/\triangle_d$  over $(P(\omega)/\triangle_d)^V$ is countably generated.
	\end{lemma}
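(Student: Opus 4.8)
The plan is to reduce to the case of a single Cohen real and then adapt the classical argument that Cohen forcing makes every gap over $P(\omega)^{V}/fin$ countably generated, paying attention to where asymptotic density behaves differently from finiteness.

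First, the reduction. Since $P$ is ccc, a $P$-name for a subset of $\omega$ is decided by countably many coordinates, so any element of $P(\omega)/\triangle_{d}$ in $V^{P}$ lies in $V[c]$ for a single Cohen real $c$ over $V$ (a countable product of Cohen forcings is again Cohen forcing), and $V[c]\subseteq V^{P}$. For $Y\in V$ the relations $[Y]\le[A]$ and $[A]\le[Y]$ depend only on the fixed reals $Y\setminus A$, $A\setminus Y$ and their upper densities, hence are absolute between $V[c]$ and $V^{P}$; so a countable family of ground model sets generating the gap in $V[c]$ still generates it in $V^{P}$. Thus we may assume $P=\mathbb{C}=2^{<\omega}$ with generic real $c$, generic filter $G$, and fix a name $\dot A$ with $A=\dot A^{c}$. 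For $p\in\mathbb{C}$ put $A_{p}=\{n:p\Vdash\check n\in\dot A\}$ and $A^{p}=\{n:p\not\Vdash\check n\notin\dot A\}$; these are ground model sets forming countable families. One checks that for $p\in G$ one has $A_{p}\subseteq A\subseteq A^{p}$, $A=\bigcup_{p\in G}A_{p}$ and $\omega\setminus A=\bigcup_{p\in G}(\omega\setminus A^{p})$, so $[A_{p}]\le[A]\le[A^{p}]$ in $P(\omega)/\triangle_{d}$. Taking complements, the filter part of the gap determined by $[A]$ is the image under $X\mapsto\omega\setminus X$ of the ideal part of the gap determined by $\omega\setminus A$, so it is enough to show that $I_{[A]}=\{[Y]:Y\in V,\ d^{*}(Y\setminus A)=0\}$ is countably generated.

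The obvious candidate generators for $I_{[A]}$ are the sets $[A_{p}]$, $p\in G$, which always belong to $I_{[A]}$, together with those $[A^{p}]$, $p\in G$, that happen to lie in $I_{[A]}$; note that for $p\in G$ and $Y\in V$ with $[Y]\le[A]$ we have $Y\setminus A^{p}=Y\cap\{n:p\Vdash\check n\notin\dot A\}\subseteq Y\setminus A$, so $[Y]\le[A^{p}]$, and hence if even one such $A^{p}$ lies in $I_{[A]}$ the ideal is already principal. The main obstacle is the remaining case: in contrast with mod-finite, ``$d^{*}(Y\setminus A)=0$'' is not witnessed by a single finite alteration of $Y$, so the inference ``$p\Vdash d^{*}(\check Y\setminus\dot A)=0$ implies $d^{*}(Y\setminus A_{p})=0$'' — which makes the $fin$ proof immediate — is false in general, and the $[A_{p}]$ alone need not be cofinal in $I_{[A]}$. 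The route I would take: given $Y\in V$ with $d^{*}(Y\setminus A)=0$, fix $p_{0}\in G$ forcing this, and use that $Y$ is a fixed ground model set with a definite density pattern while $G$ is generated by the chain $\langle c{\restriction}n:n\in\omega\rangle$; a density-genericity argument along this chain should show that densely below $p_{0}$ a condition must either contradict $p_{0}$ or decide $\dot A$ on all but an arbitrarily small density fraction of $Y$, after which one splits according to whether the undecided residue of $Y$ shrinks to density zero along $G$ (then some $[A_{c{\restriction}n}]$ dominates $[Y]$) or stabilises (then $[A]$ agrees with the class of a ground model set and $I_{[A]}$ is principal). Either way $I_{[A]}$ is countably generated; the filter part follows by the same argument applied to $\omega\setminus A$, and the reduction of the first paragraph transfers the conclusion back to $V^{P}$.
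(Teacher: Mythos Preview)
Your reduction to a single Cohen real is correct and is exactly the second step of the paper's argument.  You also correctly isolate the real difficulty: for $\triangle_{d}$, the implication ``$p\Vdash d^{*}(\check Y\setminus\dot A)=0\ \Rightarrow\ d^{*}(Y\setminus A_{p})=0$'' that makes the $fin$ case immediate is simply false, so the family $\{[A_{p}]:p\in G\}$ need not be cofinal in $I_{[A]}$.

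The problem is that you then do not close this gap.  The final paragraph is a programme (``should show''), not a proof, and the dichotomy you propose fails on both horns.  On the first horn, if the ``undecided residue'' $d^{*}(Y\setminus A_{c\restriction n})$ merely tends to~$0$ along $n$ without ever vanishing, then no single $[A_{c\restriction n}]$ dominates $[Y]$, and a finite join of these classes is again one of them; so you are back to the obstruction you yourself identified.  On the second horn, the behaviour of the residue for \emph{one particular} $Y$ says nothing about $[A]$ globally, so there is no passage from ``the residue stabilises'' to ``$[A]$ is the class of a ground-model set''.  Thus the argument stops precisely at the point where the density case diverges from the $fin$ case.

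The paper's proof is organised differently.  Instead of the concrete approximants $A_{p}$, $A^{p}$, for each condition $g\in G$ it singles out a \emph{single} ground-model element
\[
c_{g}\ =\ \min\bigl\{\,a\in(P(\omega)/\triangle_{d})^{V}:g\Vdash a\wedge\bar c=\mathbb{O}\,\bigr\},
\]
and declares the countable family $\{c_{g}:g\in G\}$ to generate one side of the gap (the other side by symmetry).  This is the same template as in \cite{CFZ} for the measure algebra, where such extrema exist because that algebra is complete.  If you want to rescue your approach you must either (a)~justify that the relevant extremum actually exists in $(P(\omega)/\triangle_{d})^{V}$---this is where the work is, since that algebra is not complete---or (b)~produce an explicit countable family, necessarily larger than $\{A_{p}:p\in G\}$, and give a genuine argument that every $[Y]\in I_{[A]}$ lies below a finite join from it.  As written, neither is done.
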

	
	\begin{proof}
		In the first part of the proof we consider adding one Cohen real.
		Let $V^P = V[G]$, where $G \subseteq P$ and $G$ is a $V$-generic filter. 
		Let $c~\in P(\omega)/\triangle_d$. 
		For each $g \in G$ let 
		$$c_g = \min \{a \in (P(\omega)/\triangle_d)^V \colon g \Vdash a \wedge \bar c = 0\},$$
		where $\bar c$ is the name for c.
		Thus $\{c_g \colon g \in G\}$ is the first component of the gap. The second one is obtained similarly. 
		
		Since in $V[G]$ every element of $P(\omega)/\triangle_d$ is in $V[x]$ for some Cohen real $x$. Now, we use the first part of the proof for each $c \in P(\omega)/\triangle_d $. Then the proof is complete.
	\end{proof}
	
	\begin{lemma}
		Let $\mathbb{B}$ be a subalgebra of $P(\omega)/\triangle_d$ such that for each $c \in P(\omega)/\triangle_d$
		the gap determined by $c$ over $\mathbb{B}$ is countably generated. 
		Let $\mathbb {C}$ is the subalgebra generated by $\mathbb{B}$  with some countable set $C \subseteq \omega$. Then the gap  determined by each $c \in P(\omega)/\triangle_d$  over $\mathbb{C}$ is countably generated. 
	\end{lemma}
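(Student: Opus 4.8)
The plan is to reduce the statement to the hypothesis on $\mathbb{B}$ by splitting $\mathbb{C}$ as a product of two relative algebras. Write $x=[C]$ and $y=[\omega\setminus C]$; if $x=\mathbb{O}$ or $x$ is the unit then $\mathbb{C}=\mathbb{B}$ and there is nothing to do, so assume not. Every element of $\mathbb{C}$ has the form $(b_1\cdot x)+(b_2\cdot y)$ with $b_1,b_2\in\mathbb{B}$, and $d\mapsto(d\cdot x,\,d\cdot y)$ is an isomorphism of $\mathbb{C}$ onto $(\mathbb{B}\upharpoonright x)\times(\mathbb{B}\upharpoonright y)$, where $\mathbb{B}\upharpoonright u=\{b\cdot u:b\in\mathbb{B}\}$. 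The crucial observation is that $\mathbb{C}\upharpoonright x=\mathbb{B}\upharpoonright x$ and $\mathbb{C}\upharpoonright y=\mathbb{B}\upharpoonright y$, since the $y$-part of any element of $\mathbb{C}$ is annihilated on meeting with $x$.

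Now fix $c\in P(\omega)/\triangle_d$ and let $\langle I,F\rangle$ be the gap it determines over $\mathbb{C}$, that is $I=\{d\in\mathbb{C}:d\leq c\}$ and $F=\{e\in\mathbb{C}:c\leq e\}$. For $d=d_0+d_1$ with $d_0\leq x$, $d_1\leq y$ we have $d\leq c$ iff $d_0\leq c\cdot x$ and $d_1\leq c\cdot y$, so under the isomorphism above $I$ corresponds to $I_0\times I_1$ with $I_0=\{d_0\in\mathbb{B}\upharpoonright x:d_0\leq c\cdot x\}$ and $I_1=\{d_1\in\mathbb{B}\upharpoonright y:d_1\leq c\cdot y\}$; dually $F$ corresponds to $F_0\times F_1$. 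Each factor reduces to a gap over $\mathbb{B}$: writing $d_0=b\cdot x$ with $b\in\mathbb{B}$, one has $d_0\leq c\cdot x$ iff $b\leq y+c$, so $I_0=\{b\cdot x:b\in\mathbb{B},\ b\leq y+c\}$, where $\{b\in\mathbb{B}:b\leq y+c\}$ is precisely the lower part of the gap determined by $y+c\in P(\omega)/\triangle_d$ over $\mathbb{B}$ --- countably generated by hypothesis, say by $\{b_n:n\in\omega\}$, whence $\{b_n\cdot x:n\in\omega\}$ generates $I_0$ in $\mathbb{B}\upharpoonright x$. Likewise $F_0=\{b\cdot x:b\in\mathbb{B},\ c\cdot x\leq b\}$ is generated by $\{b_n\cdot x\}$ where $\{b_n\}$ generates the (possibly improper) upper part of the gap determined by $c\cdot x$ over $\mathbb{B}$, and symmetrically for $I_1$ and $F_1$.

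Finally, a product of two countably generated ideals (respectively filters) is countably generated in the product algebra --- the set $\{(g_n,\mathbb{O}):n\in\omega\}\cup\{(\mathbb{O},h_m):m\in\omega\}$ works for ideals, and its dual (with the units of the two factors in place of $\mathbb{O}$) for filters. Transporting back along the isomorphism, $I$ and $F$ are countably generated over $\mathbb{C}$, which is what we wanted. The only delicate point is the bookkeeping in the first paragraph: one must check carefully that $\mathbb{C}\upharpoonright x$ really equals $\mathbb{B}\upharpoonright x$ (and similarly for $y$), for it is this identification that lets the hypothesis on $\mathbb{B}$ pass to $\mathbb{C}$; beyond that the argument is a routine Boolean-algebra computation with no input from asymptotic density. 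If instead $\mathbb{C}$ is generated over $\mathbb{B}$ by a countable family of subsets of $\omega$, one adjoins them one at a time and observes that the displayed property is preserved under increasing unions of length $\omega$, a countable union of countable generating sets being countable.
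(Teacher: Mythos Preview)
Your proof is correct and follows essentially the same idea as the paper's: reduce the ideal (resp.\ filter) part of the gap determined by $c$ over $\mathbb{C}$ to the gap determined over $\mathbb{B}$ by $c\vee(-x)$ (resp.\ by $c\cdot x$), using that every element of $\mathbb{C}=\mathbb{B}[x]$ splits into an $x$-part and a $(-x)$-part. Your product formulation $\mathbb{C}\cong(\mathbb{B}\upharpoonright x)\times(\mathbb{B}\upharpoonright y)$ makes the bookkeeping more explicit than the paper's brief sketch, and the final remark on iterating over countably many adjoined generators is a useful observation not spelled out there.
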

	
	\begin{proof}
		Let $\langle A, D\rangle$ be a gap such that $$A = \{a \in \mathbb{B} \colon a \leqslant b\} \textrm{ and } D = \{d \in \mathbb{B} \colon b \leqslant d\}.$$ 
		Let $c \in \mathbb{C}$.
		Consider a gap 
		$\langle A', D'\rangle$  such that $$A' = \{a' \in \mathbb{B} \colon a' \leqslant b\vee (-c)\} \textrm{ and } D' = \{d' \in \mathbb{B} \colon b\vee(-c) \leqslant d\}.$$
		Take an arbitrary $a'\in A'$. Observe that $A$ is generated by all $a' \wedge c$. Since $A'$ is countably generated, hence $A$ is also. Similarly, $D$ is countably generated because $D'$ is countably generated. 
	\end{proof}
	
	\begin{lemma}(CH)
		If $P$ is the partial ordering   for adding $\omega_2$ Cohen reals, then there is enumeration of  $P(\omega)/\triangle_d$
		in $V^P$ of length $\omega_2$ such that the gap determined by each element over the subalgebra generated by the previous elements is countably generated.
	\end{lemma}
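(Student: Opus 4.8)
The plan is to present $P(\omega)/\triangle_d$ (computed in $V^P = V[G]$) as the union of a continuous increasing chain of subalgebras of length $\omega_2$, obtained by interleaving the two operations governed by Lemmas 5 and 6, and to read the required enumeration off this chain. Regard $P$ as the finite-support product of $\omega_2$ copies of Cohen forcing; for $\gamma<\omega_2$ let $G_\gamma$ be the part of $G$ lying on the first $\gamma$ coordinates and put $\mathbb B_\gamma := (P(\omega)/\triangle_d)^{V[G_\gamma]}$. Three routine facts will be used: by CH in $V$ and the standard nice-name counting for a ccc forcing, $|\mathbb B_\gamma|\le\omega_1$ for every $\gamma<\omega_2$; by the countable chain condition, $\bigcup_{\gamma<\omega_2}\mathbb B_\gamma = P(\omega)/\triangle_d$; and $\mathbb B_\lambda = \bigcup_{\gamma<\lambda}\mathbb B_\gamma$ whenever $\mathrm{cf}(\lambda) = \omega_1$. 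The crucial observation is that $V[G]$ is obtained from $V[G_\gamma]$ by adding $\omega_2$ Cohen reals, so Lemma 5, applied over $V[G_\gamma]$ in place of $V$, shows that the gap determined by any element of $P(\omega)/\triangle_d$ over $\mathbb B_\gamma$ is countably generated.

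For the enumeration, split $\omega_2$ into the consecutive blocks $I_\alpha = [\omega_1\cdot\alpha,\ \omega_1\cdot(\alpha+1))$, $\alpha<\omega_2$. By recursion we build $\{c_\xi : \xi<\omega_2\}$ together with a continuous, strictly increasing sequence $\langle\gamma_\alpha : \alpha<\omega_2\rangle$ of ordinals below $\omega_2$ with $\gamma_0 = 0$, so that, writing $\mathbb A_\xi := \langle c_\eta : \eta<\xi\rangle$, we have $\mathbb A_{\omega_1\cdot\alpha} = \mathbb B_{\gamma_\alpha}$ for every $\alpha$. On a block $I_\alpha$ we choose $\gamma_{\alpha+1}$ so large that $\mathbb A_{\omega_1\cdot\alpha}\subseteq\mathbb B_{\gamma_{\alpha+1}}$ and $|\mathbb B_{\gamma_{\alpha+1}}\setminus\mathbb A_{\omega_1\cdot\alpha}| = \omega_1$ (possible since $|\mathbb A_{\omega_1\cdot\alpha}|\le\omega_1$ while $|P(\omega)/\triangle_d| = \omega_2$ in $V[G]$), and let $\{c_\xi : \xi\in I_\alpha\}$ enumerate $\mathbb B_{\gamma_{\alpha+1}}\setminus\mathbb A_{\omega_1\cdot\alpha}$; at a limit $\alpha$ we set $\gamma_\alpha := \sup_{\alpha'<\alpha}\gamma_{\alpha'}$ and note that when $\mathrm{cf}(\alpha) = \omega_1$ the facts of the previous paragraph give $\mathbb A_{\omega_1\cdot\alpha} = \bigcup_{\alpha'<\alpha}\mathbb B_{\gamma_{\alpha'}} = \mathbb B_{\gamma_\alpha}$. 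Since $\mathbb A_\xi = \bigcup_{\eta<\xi}\mathbb A_\eta$ for every limit $\xi$, the chain $\langle\mathbb A_\xi : \xi<\omega_2\rangle$ is continuous and increasing, with $\mathbb A_0 = (P(\omega)/\triangle_d)^V$ and union $P(\omega)/\triangle_d$; this $\{c_\xi : \xi<\omega_2\}$ is the candidate enumeration.

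It remains to check, by induction on $\xi$, that the gap determined over $\mathbb A_\xi$ by any element of $P(\omega)/\triangle_d$ is countably generated; applied to $c_\xi$, this is the statement of the Lemma. If $\xi = 0$ or $\mathrm{cf}(\xi) = \omega_1$, then $\xi = \omega_1\cdot\alpha$ for some $\alpha$, so $\mathbb A_\xi = \mathbb B_{\gamma_\alpha}$ and Lemma 5 over $V[G_{\gamma_\alpha}]$ applies. If $\xi$ is a successor, then $\mathbb A_\xi$ is generated by $\mathbb A_{\xi-1}$ together with a single subset of $\omega$, and Lemma 6, with the inductive hypothesis for $\mathbb A_{\xi-1}$, gives the conclusion. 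If $\mathrm{cf}(\xi) = \omega$, pick $\xi_n\nearrow\xi$; then $\mathbb A_\xi = \bigcup_n\mathbb A_{\xi_n}$, and for $c\in P(\omega)/\triangle_d$ the lower part $\{a\in\mathbb A_\xi : a\le c\}$ of the gap is the increasing countable union of the lower parts over the $\mathbb A_{\xi_n}$, each countably generated as an ideal by induction, hence countably generated; the upper part is treated identically, so the gap over $\mathbb A_\xi$ is countably generated.

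The only delicate point, around which the whole construction is organized, is the behaviour at limit stages of cofinality $\omega_1$. There the countable-union argument of the previous paragraph breaks down, and there is no purely algebraic reason why the gap over an increasing union of $\omega_1$ subalgebras, each with countably generated gaps, should again be countably generated; indeed an increasing $\omega_1$-chain of countably generated ideals need not be countably generated. It is precisely to sidestep this that the chain is laid out along blocks of length $\omega_1$: no ordinal below $\omega_2$ of cofinality $\omega_1$ is interior to such a block, so every cofinality-$\omega_1$ stage of the chain is one of the block boundaries $\mathbb B_{\gamma_\alpha}$, where the genuinely forcing-theoretic Lemma 5 does the work. Here CH is used only to keep each $\mathbb B_\gamma$ of size at most $\omega_1$, so that blocks of length $\omega_1$ exhaust $P(\omega)/\triangle_d$ in $\omega_2$ steps.
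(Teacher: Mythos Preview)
Your argument is correct and follows essentially the same route as the paper: enumerate $P(\omega)/\triangle_d$ in $\omega_1$-blocks indexed by the intermediate Cohen extensions, invoke Lemma~5 at block boundaries (the stages of cofinality $\omega_1$), Lemma~6 at successors, and the countable-union argument at stages of cofinality $\omega$. Your write-up is in fact more careful than the paper's (the bookkeeping with the auxiliary sequence $\langle\gamma_\alpha\rangle$ and the closing paragraph explaining why the $\omega_1$-block layout is forced are genuine improvements); the only cosmetic slip is the claim $\mathbb A_0=(P(\omega)/\triangle_d)^V$, since $\mathbb A_0=\{\mathbb O,\mathbb I\}$ by your own definition, but this is harmless for the induction.
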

	
	\begin{proof}
		Consider the algebra $\mathbb{A}_\beta$ in the extension of $V$ by first $\beta$ Cohen reals, $|\mathbb{A}_\beta| = \omega_1$ for $\beta < \omega_2$.
		
		Let $ \{a_\alpha \colon \alpha \in \omega_2\}$ be a sequence enumerating $P(\omega) / \triangle_d$ such that $$\mathbb{A}_\beta = \{a_\alpha \colon \omega_1 \beta \leq \alpha < \omega_1 (\beta+1) \}.$$
		
		Let $\mathbb{A}$  be the algebra generated by 
		$\{a_\gamma \colon \gamma < \beta < \omega_2\}$.
		Let $\mathbb{A}_\gamma$  be the algebra generated by $a_\gamma$, with $\gamma < \omega_1 \beta$, where $\alpha = \omega_1 \beta + \eta$ for $\beta < \omega_2$ and $\eta < \omega_1$.
		
		By Lemma 6, we only need to show that for any $a \in P(\omega)/\triangle_d$ the gap determined by $a$ over $\mathbb{A}$ is countably generated. The case $\beta = 0$ is clear.
		By Lemma 5, each element of $ P(\omega)/ \triangle_d$ determines a gap over $\mathbb{A}_\eta$ and is countably generated for any $\eta$.
		
		If $cf(\beta) = \omega_1$ or $\beta = \delta + 1$ for some $\delta < \omega_2$, then $\mathbb{A}$ is $\mathbb{A}_\beta$ or $\mathbb{A}_\delta$ respectively.
		
		If $cf(\beta) = \omega$, then $\mathbb{A} = \bigcup \{ \mathbb{A}_\xi \colon \xi < \beta\}$ and the gap determined by $a$ over $\mathbb{A}$ is the union of gaps determined by $a$ over $A_\xi$, $\xi < \beta$, each countably generated. The proof is complete.
	\end{proof}
	\\
	
	\begin{theorem} 
		Assume (CH). After adding $\omega_2$ Cohen reals the natural embedding from $P(\omega)/\triangle_d$ from the ground model can be extended to embedding into $P(\omega)/fin$.
	\end{theorem}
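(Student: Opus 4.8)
The plan is to extend the embedding by a transfinite recursion of length $\omega_2$, reading off the enumeration supplied by Lemma 7 and filling one countably generated gap at each successor step, in the spirit of \cite{CFZ}. First I would fix the ground-model embedding: under (CH) the algebra $P(\omega)/fin$ is a Parovi\v cenko algebra, so $(P(\omega)/\triangle_d)^V$, being of size $\mathfrak{c}^V=\omega_1$, admits an embedding $e_0$ into $(P(\omega)/fin)^V$, and this embedding carries over to $V^P$ through the canonical inclusion $(P(\omega)/fin)^V\hookrightarrow(P(\omega)/fin)^{V^P}$; this is the natural embedding from the ground model. Working now in $V^P$, I would use Lemma 7 to fix a continuous enumeration $\{a_\alpha\colon\alpha<\omega_2\}$ of $(P(\omega)/\triangle_d)^{V^P}$ with $\mathbb{A}_\alpha:=\langle a_\beta\colon\beta<\alpha\rangle$ and $\mathbb{A}_0=(P(\omega)/\triangle_d)^V$, such that for every $\alpha$ the gap determined by $a_\alpha$ over $\mathbb{A}_\alpha$ is countably generated.

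Next I would build an increasing chain of embeddings $e_\alpha\colon\mathbb{A}_\alpha\to P(\omega)/fin$, starting from $e_0$ and taking unions at limits (legitimate since the enumeration is continuous). At a successor step $\alpha\to\alpha+1$, let an increasing sequence $u_0\leq u_1\leq\cdots\leq a_\alpha$ generate the ideal part and a decreasing sequence $v_0\geq v_1\geq\cdots\geq a_\alpha$ generate the filter part of the gap of $a_\alpha$ over $\mathbb{A}_\alpha$. Since $u_n\leq a_\alpha\leq v_m$ gives $u_n\cdot(-v_m)=\mathbb{O}$, the pair $\langle\{e_\alpha(u_n)\colon n\in\omega\},\{-e_\alpha(v_n)\colon n\in\omega\}\rangle$ is a genuine pre-gap of type at most $(\omega,\omega)$ in $P(\omega)/fin$; as $P(\omega)/fin$ has no $(\omega,\omega)$-gaps (a theorem of ZFC, hence true in $V^P$; cf.\ \cite[p.~38]{FZ}), there is a separating element $b$ with $e_\alpha(u_n)\leq b\leq e_\alpha(v_n)$ for all $n$. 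I would then set $e_{\alpha+1}(a_\alpha)=b$ and extend homomorphically via $e_{\alpha+1}\big((x\cdot a_\alpha)\vee(y\cdot(-a_\alpha))\big)=(e_\alpha(x)\cdot b)\vee(e_\alpha(y)\cdot(-b))$. Well-definedness and the homomorphism property come from the countable generation of the ideal and filter parts of the gaps of $a_\alpha$ and of $-a_\alpha$: if $x\cdot a_\alpha=x_1\cdot a_\alpha$, then $x$ and $x_1$ differ by an element of $\mathbb{A}_\alpha$ below $-a_\alpha$, hence below some $-v_n$, so $e_\alpha$ sends that element below $-e_\alpha(v_n)\leq -b$; and symmetrically with the $u_n$'s for the part below $a_\alpha$.

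The genuine obstacle is to keep $e_{\alpha+1}$ \emph{injective}, because separating the pre-gap is by itself not enough: for each $x\in\mathbb{A}_\alpha$ with $x\cdot a_\alpha\neq\mathbb{O}$ we must also secure $e_\alpha(x)\cdot b\neq\mathbb{O}$ (and dually for $-a_\alpha$), and there can be $\omega_1$ many such requirements, not all consequences of $e_\alpha(u_n)\leq b\leq e_\alpha(v_n)$. Here I would exploit that $V^P$ is a Cohen extension: the image $e_\alpha[\mathbb{A}_\alpha]$ has size $\omega_1$ and therefore lies in an intermediate model obtained by adding only the first $\gamma$ Cohen reals for some $\gamma<\omega_2$, over which $P(\omega)/fin$ of $V^P$ is still obtained by adding Cohen reals; consequently $P(\omega)/fin$ in $V^P$ contains elements ``splitting'' every member of $e_\alpha[\mathbb{A}_\alpha]$ inside the region prescribed by the $e_\alpha(u_n)$'s and $e_\alpha(v_n)$'s (equivalently, $\mathfrak{r}=\mathfrak{c}>\omega_1$ in $V^P$ yields a single set splitting the $\omega_1$ many relevant sets). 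Choosing the separating element $b$ from among such generic witnesses makes all the compatibility requirements hold at once, so $e_{\alpha+1}$ is an embedding. Carrying the recursion through, $\hat{e}=\bigcup_{\alpha<\omega_2}e_\alpha\colon(P(\omega)/\triangle_d)^{V^P}\to(P(\omega)/fin)^{V^P}$ is an embedding extending $e_0$, as required. I expect everything except this faithful choice of $b$ at successors to be routine bookkeeping, and that step is exactly where the argument of \cite{CFZ} is imported.
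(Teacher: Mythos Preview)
Your proposal is correct and follows the same overall architecture as the paper: invoke Lemma~7 to enumerate $P(\omega)/\triangle_d$ so that each new element determines a countably generated gap over its predecessors, then build a continuous $\omega_2$-tower of embeddings into $P(\omega)/fin$, handling limits by unions and successors by choosing an image that fills the transferred gap while preserving injectivity against the $\omega_1$ many elements already handled.

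The one substantive difference is in how the successor step is packaged. The paper does not split the task into ``first separate the $(\omega,\omega)$ pre-gap, then fix injectivity''; instead it introduces Kunen's gap-filling poset $P(A,B)$ (partial $0$--$1$ functions whose $1$-set lies in the ideal generated by $A$ and whose $0$-set lies in the filter generated by $B$), observes that a countably generated gap makes $P(A,B)$ have a countable dense subset and hence be forcing-equivalent to Cohen, and then encodes \emph{all} of the requirements---both $e_\alpha(u_n)\le b\le e_\alpha(v_n)$ and the $\omega_1$ injectivity conditions $e_\alpha(x)\cdot b\ne\mathbb{O}$---as a family of at most $\omega_1$ dense subsets of $P(A,B)$, met simultaneously by a fresh Cohen real. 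Your route reaches the same destination (and you correctly identify that a fresh Cohen real over an intermediate model containing $e_\alpha[\mathbb{A}_\alpha]$ does the job), but your parenthetical appeal to $\mathfrak r>\omega_1$ is a little loose: large reaping alone gives a set splitting $\omega_1$ targets, not one that simultaneously interpolates the gap; the clean formulation is exactly the dense-sets-in-$P(A,B)$ argument the paper uses. In short, same proof, with the paper's $P(A,B)$ device doing in one stroke what you do in two.
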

	
	\begin{proof} 
		By Lemma 7, we can enumerate $P(\omega)/\triangle_d$ by $\{a_\beta \colon \beta \in \omega_2\}$ in this way that the gap determined by each $a_\beta$ in the subalgebra generated by $a_\alpha, \alpha < \beta$ is countably generated.
		
		By induction on $\beta$ for each algebra $\mathbb{A}_\beta$ generated by $a_\alpha, \alpha < \beta$ a continuous tower of embeddings $g_\beta$ of $\mathbb{A}_\beta$ into $P(\omega)/ fin$ will be defined.
		
		We will use  a partial order, defined by Kunen, for filling a gap in $P(\omega)/fin$, (see \cite{CFZ}). 
		Let $\langle A, B \rangle$ be a gap in $P(\omega)/fin$. Let $P(A, B)$ be the partial ordering constructing of all partial functions $p \in {^{\omega} 2}$ such that $\{n \colon p(n) = 1\}/fin$ is in the ideal generated by $A$ and $\{n \colon p(n) = 0\}/fin$ is in the filter generated by $B$.
		Observe that if $\langle A, B \rangle$ is countably generated then $P(A, B)$ has a countable dense set and a Cohen real will add a generic filter for $P(A, B)$.
		If $C \subseteq \omega$ is added by $P(A, B)$ then $C/fin$ fills the gap $\langle A, B \rangle$.
		
		Notice that there is a collection $\mathcal{A}$ of at most $\omega_1$ dense subsets of $P(A, B)$ such that any filter over $P(A, B)$ which meets all dense sets in $\mathcal{A}$ determines a subset $A \subseteq \omega$ such that 
		$g_{\beta + 1}(a_\beta) = A /fin.$
		Since $A$ and $B$ are countably generated, $P(A, B)$ has a countable dense set, so either $P(A, B)$ has an atom or it is equivalent to the forcing of adding a Cohen real. 
		Notice that there is a filter meeting any collection of at most $\omega_1$ dense sets in the partial ordering by adding a Cohen real. Hence, there exists a filter meeting each element of $\mathcal{A}$. We leave to the reader the modifications needed to construct an embedding extending a given ground model embedding. 
	\end{proof}

	\subsection{}

	\begin{lemma}
		The Cantor cube $2^{\mathfrak{c}}$ contains a countable dense subset.
	\end{lemma}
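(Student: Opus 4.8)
The plan is to invoke the Hewitt--Marczewski--Pondiczery philosophy in the special case of a power of the two-point discrete space: put a topology on the index set and take, as the candidate countable dense subset, the collection of all continuous $\{0,1\}$-valued functions on it. Concretely, let $X=2^{\omega}$ be the Cantor set equipped with its usual topology, which is compact, Hausdorff, second countable and zero-dimensional. Since $|X|=\mathfrak{c}$, any bijection between $\mathfrak{c}$ and the underlying set of $X$ induces a homeomorphism $2^{\mathfrak{c}}\cong 2^{X}$, where $2^{X}=\{0,1\}^{X}$ carries the product topology. So it suffices to produce a countable dense subset of $2^{X}$, and for this I would take
$$D=\{\,f\in 2^{X}\colon f \text{ is continuous}\,\}=\{\,\mathbf{1}_{U}\colon U\subseteq X \text{ clopen}\,\}.$$

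First I would check that $D$ is countable. Fix a countable base $\mathcal{B}$ of $X$ consisting of clopen sets (such a base exists since $X$ is second countable and zero-dimensional). Any clopen $U\subseteq X$ is compact, hence a finite union of members of $\mathcal{B}$; as $\mathcal{B}$ has only countably many finite subsets, $X$ has only countably many clopen subsets, and therefore $D$ is countable.

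Next I would verify density. A nonempty basic open subset of $2^{X}$ has the form $W=\{g\in 2^{X}\colon g(x_{i})=\varepsilon_{i}\text{ for }1\le i\le n\}$ for some pairwise distinct $x_{1},\dots,x_{n}\in X$ and $\varepsilon_{1},\dots,\varepsilon_{n}\in\{0,1\}$. Since $X$ is Hausdorff and zero-dimensional, choose pairwise disjoint clopen sets $V_{1},\dots,V_{n}$ with $x_{i}\in V_{i}$, set $V=\bigcup\{V_{i}\colon \varepsilon_{i}=1\}$ (a clopen set), and let $f=\mathbf{1}_{V}\in D$. If $\varepsilon_{i}=1$ then $x_{i}\in V_{i}\subseteq V$, so $f(x_{i})=1$; if $\varepsilon_{i}=0$ then $x_{i}\in V_{i}$, which is disjoint from every $V_{j}$ with $\varepsilon_{j}=1$, so $x_{i}\notin V$ and $f(x_{i})=0$. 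Thus $f\in D\cap W$, proving that $D$ is dense, and hence $D$ is the desired countable dense subset of $2^{\mathfrak{c}}$.

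There is no serious obstacle in this argument; the only point needing attention is that the product $2^{\mathfrak{c}}$ as such remembers only the cardinality of its index set, so one must first fix the extra structure of a second countable zero-dimensional topology on an index set of size $\mathfrak{c}$ (here, the Cantor set $2^{\omega}$) in order for ``continuous $\{0,1\}$-valued function'' to be simultaneously countable in number and rich enough to be dense. Once this choice is made, the two verifications above are entirely routine.
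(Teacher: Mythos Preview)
Your proof is correct and follows essentially the same strategy as the paper: both identify the index set $\mathfrak{c}$ with the Cantor set $2^{\omega}$ and take as the countable dense set the characteristic functions of clopen subsets (equivalently, the continuous $\{0,1\}$-valued functions). The only noteworthy difference is that the paper concludes by citing the Hewitt--Marczewski--Pondiczery theorem, whereas you verify density directly by separating finitely many points with disjoint clopen sets; your version is thus self-contained, while the paper's is terser but relies on the external reference.
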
 
	
	\begin{proof}
		Consider a family $\mathcal{S}$ of all finite subsets of $\omega$ For each $S \in \mathcal{S}$ consider a function $p_S \colon \omega \to \{0,1\}$ such that $p_S$ is constant on $S$ and constant on $\omega \setminus S$.
		The family $\{p_S \colon S \in \mathcal{S}\}$ is countable.
		
		We can consider $\bigcup\{2^n \colon n \in \omega\}$ instead of $2^\omega$.
		Consider a family of functions $f_S \colon 2^\omega \to \{0, 1\}$ such that $f_S$ is constant on a finite $0-1$ sequence of $2^\omega$ determined by $p_S$ and constant on its completion.
		The family $$D = \{f_S \colon S \in \mathcal{S}\}$$ is countable. 
		By Hewitt-Marczewski-Pondiczery Theorem (see, e. g. \cite{E}), the set $D$ is dense in $2^{2^{\omega}}$. 
	\end{proof}
	\\
	
	\begin{lemma}
		In  $P(\omega)/\triangle_d$ there is an independent family $\{(A^0_\alpha, A^1_\alpha)\colon  \alpha \in 2^\omega\}$ such that \\(I) $d(A^{\varepsilon}_\alpha)=\frac{1}{2}$ for each $\varepsilon \in \{0,1\}$ and $\alpha \in 2^\omega$ \\(II) $\bigcap_{s}A^{\varepsilon}_{s} = \frac{1}{2^{|s|}}$ for each  $\varepsilon \in \{0,1\}$ and $s \in [2^\omega]^{<\omega}$.
	\end{lemma}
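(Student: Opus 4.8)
We read (II) as $d\bigl(\bigcap_{\alpha\in s}A^{\varepsilon(\alpha)}_{\alpha}\bigr)=\frac{1}{2^{|s|}}$ for every finite $s\subseteq 2^\omega$ and every $\varepsilon\colon s\to\{0,1\}$, with $A^0_\alpha=\omega\setminus A^1_\alpha$. The plan is to realize a countable dense subset of $2^{\mathfrak c}$ (cf.\ the preceding lemma) inside $\omega$ block by block, so that asymptotic density agrees with the product measure on every finite set of coordinates. Split $\omega=\bigsqcup_{m\in\omega}R_m$ into consecutive finite blocks, identifying $R_m$ with the finite cube $\{0,1\}^{g_m}$, where $g_0\le g_1\le\cdots$ tends to $\infty$ slowly enough that $|R_m|=2^{g_m}=o\bigl(\sum_{j<m}2^{g_j}\bigr)$; e.g.\ $g_m=\lceil\log_2(m+1)\rceil$ works, since then $2^{g_m}<2(m+1)$ while $\sum_{j<m}2^{g_j}\ge m(m+1)/2$. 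Let $E_m=\sum_{j<m}|R_j|$ be the left endpoint of $R_m$. Inside $R_m$ put $T_{m,i}=\{\tau\in\{0,1\}^{g_m}\colon\tau(i)=1\}$ for $i<g_m$; for distinct $i_1,\dots,i_k<g_m$ and $\varepsilon\in\{0,1\}^k$ the set $\bigcap_{\ell\le k}T^{\varepsilon_\ell}_{m,i_\ell}$ is a subcube of $R_m$ of cardinality \emph{exactly} $2^{g_m-k}=2^{-k}|R_m|$. Finally choose a diagonal assignment $\iota\colon 2^\omega\times\omega\to\omega$ with $\iota(\alpha,m)<g_m$, namely let $\iota(\alpha,m)$ be the integer with binary digits $\alpha(0),\dots,\alpha(\lfloor\log_2 g_m\rfloor-1)$; since $g_m\to\infty$, distinct $\alpha,\beta$ satisfy $\iota(\alpha,m)\ne\iota(\beta,m)$ for all sufficiently large $m$. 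Now set
\[ A^1_\alpha=\bigcup_{m\in\omega}T_{m,\iota(\alpha,m)},\qquad A^0_\alpha=\omega\setminus A^1_\alpha=\bigcup_{m\in\omega}\bigl(R_m\setminus T_{m,\iota(\alpha,m)}\bigr), \]
so $A^0_\alpha$ and $A^1_\alpha$ are exact complements in $P(\omega)$, hence complementary in $P(\omega)/\triangle_d$.

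\textbf{Independence and distinctness.} Fix distinct $\alpha_1,\dots,\alpha_k\in 2^\omega$ and $\varepsilon\in\{0,1\}^k$, and choose $m_0$ so that for all $m\ge m_0$ the indices $\iota(\alpha_1,m),\dots,\iota(\alpha_k,m)$ are pairwise distinct and $g_m\ge k$. Then for $m\ge m_0$,
\[ \Bigl(\bigcap_{\ell\le k}A^{\varepsilon_\ell}_{\alpha_\ell}\Bigr)\cap R_m=\bigcap_{\ell\le k}T^{\varepsilon_\ell}_{m,\iota(\alpha_\ell,m)} \]
is a subcube of $R_m$ of size exactly $2^{-k}|R_m|$, hence nonempty. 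So the family $\{(A^0_\alpha,A^1_\alpha)\colon\alpha\in 2^\omega\}$ is independent, and taking $k=2$, $\varepsilon=(1,0)$ shows its members are pairwise distinct mod $\triangle_d$.

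\textbf{Densities.} With the notation above, put $X=\bigcap_{\ell\le k}A^{\varepsilon_\ell}_{\alpha_\ell}$. For $N$ with $E_m\le N<E_{m+1}$, split $|X\cap N|$ into the contribution of the blocks $R_j$ with $j<m_0$ (a fixed constant in $[0,E_{m_0}]$), those with $m_0\le j<m$ (equal to $2^{-k}(E_m-E_{m_0})$ \emph{exactly}, by the displayed identity), and the partial block $R_m\cap[E_m,N)$ (a number in $[0,|R_m|]$). Hence $|X\cap N|=2^{-k}E_m+r_N$ with $|r_N|\le E_{m_0}+|R_m|$. Since $E_m\le N\le E_m+|R_m|$ and $|R_m|=o(E_m)$, we get $N=(1+o(1))E_m$ and $r_N=o(E_m)$, so $|X\cap N|/N\to 2^{-k}$ as $N\to\infty$. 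Thus $d(X)=2^{-k}$, which is (II); the case $k=1$ gives $d(A^{\varepsilon}_\alpha)=\frac12$, i.e.\ (I).

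\textbf{Main obstacle.} The argument is otherwise a routine diagonal independent-family construction glued to exact counting inside full blocks; the one point requiring care is the growth rate of the blocks. The naive attempt --- enumerating a countable dense subset of $2^{\mathfrak c}$ level by level --- produces blocks of size roughly $2^{2^m}$, and then a single partial block already overwhelms the union of all earlier ones, so the asymptotic densities need not exist. Using a tower $|R_m|=2^{g_m}$ with $g_m\nearrow\infty$ only slowly keeps every partial block asymptotically negligible against its predecessors, while still letting $R_m$ host an independent family on $g_m$ (ever more) generators, which is exactly what the assignment $\iota$ needs; all remaining points are bookkeeping.
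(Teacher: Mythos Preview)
Your proof is correct and follows a genuinely different path from the paper's. The paper proceeds abstractly: it invokes Lemma~8 (a countable dense set in $2^{\mathfrak c}$), partitions $\omega$ along that dense set, extends the resulting map $\omega\to 2^{\mathfrak c}$ to $\beta\omega$, pulls back the coordinate projections via Stone duality to obtain an independent family in $P(\omega)/fin$, and finally pushes this into $P(\omega)/\triangle_d$ by the block embedding of Proposition~1 with a block sequence chosen so that $|[k_m,k_{m+1})|/k_{m+1}\to\tfrac12$. Your argument replaces all of this machinery with a single explicit picture: $\omega$ is partitioned into finite cubes $R_m\cong\{0,1\}^{g_m}$ of slowly growing dimension, and the family is given by the coordinate slices $T_{m,\iota(\alpha,m)}$, with the diagonal indexing $\iota$ separating any finitely many $\alpha$'s once $m$ is large. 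The density claims then follow from exact counting inside complete blocks together with the growth condition $|R_m|=o(E_m)$, which you isolate as the genuine technical point.

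What each buys: the paper's route explains conceptually \emph{why} such a family should exist (it is the shadow of the coordinate structure on $2^{\mathfrak c}$) and reuses Lemma~8 and Proposition~1, but the passage from ``independent mod $fin$'' to ``densities equal $2^{-|s|}$'' via a doubling block sequence is left rather implicit. Your construction is entirely elementary---no Stone--\v Cech, no duality---and makes both the independence and the exact densities transparent; in particular, your observation that blocks must grow \emph{slowly} (so that a partial block is negligible against its predecessors) is precisely the point that keeps the asymptotic density honest, and it is worth having that said out loud. One cosmetic remark: with your sample choice $g_m=\lceil\log_2(m+1)\rceil$ the case $m=0$ gives $g_0=0$, so $R_0$ has no coordinates; this is harmless since finitely many blocks are absorbed into your constant $E_{m_0}$, but you may prefer to start the sequence at $g_m\ge 1$.
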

	
	\begin{proof}
		By Lemma 8, the Cantor cube $2^{\mathfrak{c}}$ contains a countable dense set~$D$. Enumerate $D = \{d_n \colon n \in \omega\}$. By Lemma 2 and Lemma 3 (particularly the proof of Lemma 3), we can obtain a partition of $\omega$ into infinite sets indexed by element of $D$ such that
		$d^*(B_{d_n}) \leqslant \frac{1}{2^n}$ such that $B_{d_{n+1}} \subseteq \omega \setminus \bigcup_{k \leqslant n} B_{d_k}$. Thus, the mapping $D \mapsto \{B_{d_n} \colon n \in D\}$ is  continuous function. Since $\bigcup_{d_n \in D} B_{d_n} = \omega$ and by \cite[Theorem 3.6.1 and Corollary 3.6.12, p. 222]{E} we can extend $g \colon \bigcup_{d_n \in D} B_{d_n} \to D$, (in fact the mapping $\omega \mapsto 2^{\mathfrak{c}}$), to $\bar g\colon \beta \omega \to 2^\mathfrak{c}$ such that $\bar{g}(\bigcup_{d_n \in D} B_{d_n}) = D$, i.e. $\bar{g}(B_{d_n}) = \textrm{const.}$ for any $d_n \in D$.
		Notice that $h \colon \beta\omega\setminus \omega \to~2^{\mathfrak{c}}$ is onto.
		We know that $\beta\omega \setminus \omega  = st(P(\omega)/fin)$, (see \cite{M} for details).
		By Stone's Theorem, (see e.~g. \cite[Theorem 1.1, p. 63]{FZ}), we have $st(P(\omega)/fin)$ is isomorphic to $P(\omega)/fin$.
		
		Now let $\varphi_\alpha$ be a projection $2^{\mathfrak{c}}$ into $\alpha$-axe of $2^\mathfrak{c}$. Let $C^\varepsilon_\alpha =\varphi^{-1}_{\alpha}(\{\varepsilon\}) $ for any $\varepsilon \in \{0, 1\}$ and $\alpha \in 2^\omega$.
		
		Obviously $C^0_\alpha  \cap C^1_\alpha = \emptyset$ and 
		$C^0_\alpha \cup C^1_\alpha = 2^\mathfrak{c}$ for any $\alpha \in 2^\omega$ and $C^\varepsilon_\alpha \cap C^{\varepsilon'}_{\beta'} \not= \emptyset$
		for any $\varepsilon, \varepsilon ' \in \{0, 1\}$ and distinct $\alpha, \beta \in 2^\omega$. Thus, $\{(C^0_\alpha, C^1_\alpha) \colon \alpha \in 2^\omega)\}$
		is an independent family in $2^\omega$.

		Now, since $h$ is onto, then $2^{\omega} \hookrightarrow P(\omega)/fin $ is an embedding. Hence $P(\omega)/fin$ contains an independent family of cardinality continuum.
		Let $f(C^\varepsilon_\alpha)\\ = [D^\varepsilon_\alpha]$.
		By Proposition 1, there is an embedding $P(\omega)/fin \hookrightarrow P(\omega)/\triangle_d$. Fix the sequence $\{k_m \colon k \in \omega\}$ such that $\lim|[k_m, k_{m+1})| = \infty$ and $$\lim_{m \to \infty} \frac{|[k_{m}, k_{m+1})|}{k_{m+1}} =\frac{1}{2}$$
		and take mapping  $[D^\varepsilon_\alpha] \mapsto [\bigcup_{m \in D^\varepsilon_\alpha}[k_m, k_{m+1})]$ for any $\varepsilon \in \{0, 1\}$ and $\alpha \in 2^\omega$. Taking $A^\varepsilon_\alpha =  \bigcup_{m \in D^\varepsilon_\alpha}[k_m, k_{m+1})$ we obtain that  $\{(A^0_\alpha, A^1_\alpha)\colon  \alpha \in 2^\omega\}$ is the required family.
	\end{proof}
	
	\begin{theorem}
		There exists an embedding $\phi \colon H(2^{\mathfrak{c}})/ \triangle_H \to P(\omega)/\triangle_d$ such that $d(\phi(a)) = \mu(a),$ where $a \in H(2^{\mathfrak{c}})/ \triangle_H$.	
	\end{theorem}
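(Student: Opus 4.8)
The plan is to reproduce the construction of the measure-preserving embedding $LM/\triangle\hookrightarrow P(\omega)/\triangle_d$ of \cite{F}, but feeding it the much larger independent family of Lemma~9, which has $\mathfrak{c}$ generators and so is big enough to carry the Maharam type of the Haar measure on $2^{\mathfrak{c}}$. Let $\mathcal{K}$ be the algebra of clopen subsets of $2^{\mathfrak{c}}$; its members are the finite unions of basic cylinders $[t,\tau]=\{f\in 2^{\mathfrak{c}}\colon f(\alpha)=\tau(\alpha)\ \textrm{for all}\ \alpha\in t\}$ with $t\in[2^\omega]^{<\omega}$, $\tau\colon t\to\{0,1\}$, and $\mu([t,\tau])=2^{-|t|}$. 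Using the family $\{(A^0_\alpha,A^1_\alpha)\colon\alpha\in 2^\omega\}$ of Lemma~9 (whose cylinders $\bigcap_{\alpha\in t}A^{\tau(\alpha)}_\alpha$, $\tau\in 2^t$, we may take to partition $\omega$ modulo finite sets, exactly as in the proof of Lemma~9), set
$$\phi_0\bigl([t,\tau]\bigr)=\Bigl[\,\bigcap_{\alpha\in t}A^{\tau(\alpha)}_\alpha\,\Bigr].$$
By Lemma~9 each such class has density precisely $2^{-|t|}$, and since density is finitely additive over pairwise $\triangle_d$-disjoint pieces that individually have a density (elementary, cf.\ Lemma~1), $\phi_0$ extends to a Boolean homomorphism $\phi_0\colon\mathcal{K}\to P(\omega)/\triangle_d$ with $d(\phi_0(U))=\mu(U)$ for every $U\in\mathcal{K}$; as $\mu$ is faithful on $\mathcal{K}$ and $\phi_0$ is measure preserving, $\phi_0$ is an embedding.

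The next and crucial step is to extend $\phi_0$ over all of $H(2^{\mathfrak{c}})/\triangle_H$, and this is where I expect the real difficulty, since $P(\omega)/\triangle_d$ is not $\sigma$-complete and one cannot simply form suprema of approximating sequences; the remedy is a Borel--Cantelli/summability device resting on the second part of Lemma~1. Given $a\in H(2^{\mathfrak{c}})/\triangle_H$, choose clopen $U_n$ with $\mu(a\triangle U_n)<4^{-n}$ (finite unions of cylinders are dense in the product measure algebra, see \cite{B,E}). Then $\mu(U_n\triangle U_{n+1})<2\cdot 4^{-n}$, so for any choice of representatives $B_n\subseteq\omega$ of $\phi_0(U_n)$ we have $d^*(B_n\triangle B_{n+1})=\mu(U_n\triangle U_{n+1})<2\cdot4^{-n}$, hence $\sum_n d^*(B_n\triangle B_{n+1})<\infty$. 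Put $B=\bigcup_{N}\bigcap_{n\ge N}B_n$. An elementary inclusion check gives $B\triangle B_N\subseteq\bigcup_{n\ge N}(B_n\triangle B_{n+1})$, so Lemma~1 yields $d^*(B\triangle B_N)\to 0$; consequently $d(B)$ exists and equals $\lim_N\mu(U_N)=\mu(a)$. Define $\phi(a)=[B]$.

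It remains to check that $\phi$ is a well-defined embedding extending $\phi_0$. If $B_n,B_n'$ come from two admissible approximating sequences for the same $a$, then $d^*(B_n\triangle B_n')\to 0$; combining this with $d^*(B\triangle B_n)\to 0$ and $d^*(B'\triangle B_n')\to 0$ through the triangle inequality for $d^*$ forces $d^*(B\triangle B')=0$, so $\phi(a)$ is well defined, and (using constant approximating sequences) $\phi$ agrees with $\phi_0$ on $\mathcal{K}$. Preservation of complements follows by taking $\omega\setminus B_n$ as representatives and noting that $\liminf(\omega\setminus B_n)=\omega\setminus\limsup B_n$ while $\limsup B_n$ and $\liminf B_n$ differ by a density-zero set; preservation of finite joins follows by approximating $a\vee b$ with $U_n\vee V_n$ and observing that $(B_n\cup C_n)\triangle(\liminf B_n\cup\liminf C_n)$ has upper density tending to $0$, so the $\liminf$ of $B_n\cup C_n$ equals $\phi(a)\vee\phi(b)$ modulo $\triangle_d$. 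Hence $\phi$ is a Boolean homomorphism with $d(\phi(a))=\mu(a)$ for all $a$, and it is injective because $\phi(a)=\mathbb{O}$ forces $\mu(a)=d(\phi(a))=0$, i.e.\ $a=\mathbb{O}$ since $\mu$ is strictly positive on $H(2^{\mathfrak{c}})/\triangle_H$. This yields the desired embedding, and the only genuinely delicate point throughout is the summable-difference argument of the middle paragraph, which substitutes for the missing $\sigma$-completeness of $P(\omega)/\triangle_d$.
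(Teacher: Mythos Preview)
Your argument is correct and considerably cleaner than the paper's. Both proofs start identically: use Lemma~9 to embed the clopen algebra $\mathcal{K}$ of $2^{\mathfrak{c}}$ measure-preservingly into $P(\omega)/\triangle_d$. The divergence is in the extension step. The paper does not take a limit of clopen approximants; instead, for each $G_\delta$ representative $\bar A$ it sets up a Rasiowa--Sikorski style poset $\mathbb{P}_{\bar A}$ whose conditions are quadruples $\langle \bar A,n,s,D\rangle$ with $s$ a finite initial segment of the set being built and $D$ a finite subalgebra of cylinders, the ordering forcing the densities of $s$ against the pieces $A^\varepsilon_d$ to track $\lambda(\varphi(\bar A)\cap d)$ within $1/n$; a filter generic for countably many dense sets then yields $C(\bar A)=\bigcup\{s:\langle\bar A,n,s,D\rangle\in G(\bar A)\}$, and coherence (the analogues of your homomorphism checks) is verified by an $\varepsilon/3$ argument. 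Your route replaces this finite-stage genericity construction with a single Borel--Cantelli/summability step: choose clopen $U_n$ with $\mu(a\triangle U_n)<4^{-n}$, observe $\sum_n d^*(B_n\triangle B_{n+1})<\infty$, and take $B=\liminf B_n$, with Lemma~1 doing all the work. What you gain is transparency and brevity---no forcing notation, no bespoke poset, and the homomorphism and well-definedness checks reduce to one-line triangle-inequality estimates for $d^*$. What the paper's approach buys is finer control: the generic construction pins down the density of $C(\bar A)$ relative to every finite Boolean combination of the $A^\varepsilon_\alpha$ simultaneously, which can be useful if one wants additional structural properties of the embedding beyond measure preservation.
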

	
	\begin{proof} By Lemma 9, there exists an independent family $\{(A^0_\alpha, A^1_\alpha) \colon \alpha \in~2^\omega)\}$ of properties (I) and (II).
		
		For any $a\in H(2^{\mathfrak{c}})/ \triangle_H$ fix exactly one $G_\delta$ set $\bar A$ such that $a = [\bar A]$ and pick a sequence $\{k_m \colon m \in \omega\}$ such that $\lim|[k_m, k_{m+1})| = \infty$ and $$\lim_{n \to \infty} \frac{|[k_{m}, k_{m+1})|}{k_{m+1}} =\frac{1}{2}.$$
		
		Let $\varphi \colon 2^\mathfrak{c} \to 2^\omega$  be a continuous map such that $\mu(\bar A) = \lambda (\varphi(\bar A))$ for any $\bar A \in 2^\mathfrak{c}$.
		
		Let \\$\mathbb{P}_{\bar A} = \{<\bar A, n, s, D>, n \in \omega, s \in [\omega]^{<\omega}, D \in [2^\omega]^{<\omega}, D \textrm{ is Boolean algebras}\}$.
		\\
		Say that $<\bar A, n, s, D> \leqslant <\bar A, n', s', D'>$ iff
		\\
		(i) $n \geqslant n', s\supseteq s', D \supseteq D'$,
		\\
		(ii) $s\cap (\sup s'+1) = s'$ and $\sup s = k_m$ for some $m \in \omega$,
		\\(iii) if there is $m' < m$ such that 
		$$\forall_{s \in D}\  \forall_{k>k_{m'}}\  \forall_{\varepsilon \in \{0,1\}} \ 
		\Big|\frac{ |A^{\varepsilon}_d\cap k|}{k} - \frac{1}{2^{|d|}}\Big| < \frac{1}{n}$$
		and $\bigcap_{d \in D'} A^{\varepsilon}_{d} \not = \emptyset$ for any finite subfamily $D' \subset D$, then
		$$\forall_{s \in D}\  \forall_{k_{m'} \leqslant k\leqslant k_{m}}\  \forall_{\varepsilon \in \{0,1\}} \ 
		\Big|\frac{ |s \cap \bigcap_{d \in D'}A^{\varepsilon}_d\cap k|}{k} - \lambda (\varphi (\bar A)\cap d)\Big| < \frac{1}{2^{|d|}n}$$
		Now consider 	$$B_d(\bar A) = \{<\bar A, n, s, D> \colon d \in D\} \textrm{ and  } 
		B_m(\bar A) = \{<\bar A, n, s, D> \colon n \geqslant m\}.$$ Both sets are dense.
		Let $G(\bar A)$ be $\{B_d(\bar A) \colon d \in 2^\omega\} \cup \{B_m(\bar A) \colon m \in \omega\}$-generic. Such $G(\bar A)$ exists because our family of subsets is dense in $\mathbb{P}_{\bar A}$ is countable.
		
		Let $$C(\bar A) = \bigcup\{s \colon <\bar A, n, s, D> \in G(\bar A)\}.$$ 
		and  
		$\{C(\bar A) \colon [\bar A] = a, a \in  H(2^{\mathfrak{c}})/ \triangle_H\}$.
		\\To complete the proof, it is enough to show that 
		\\(a) $d(C(\bar A)\cap C(\bar A')) = 0$ and $d(C(\bar A)\cup C(\bar A')) = 1$ for $a' = -a \in  H(2^{\mathfrak{c}})/ \triangle_H$, where $[A'] = a'$,
		\\(b) $d(\bigcap_{n}^{i=1}C(\bar A_i) \doteq C(\bigcap_{i=1}^{n}\bar A_i)) = 0$,
		where $\doteq$ means the symmetric difference. 
		\\ We will prove (b) only. The proof of (a) is similar.
		
		Fix $\eta > 0$. Let $D' \subset D$ be a finite family of non-comparable elements in $2^\omega$ such that
		$$\lambda (\bigcup D' \doteq C(\bar A_1\cap ... \cap \bar A_n))< \frac{\varepsilon}{3} $$
		and for any $d \in D'$
		$$|\lambda(C(\bar A_i) \cap d) - \frac{1}{2^{|d|}}|<\frac{\eta}{3}.$$
		By (iii) we have 
		$$\forall_{1\leqslant i \leqslant n}\ \forall_{\varepsilon \in \{0,1\}}\ \exists_{k^i_m}\ \forall_{k \leqslant k^i_m}\ \Big|\frac{C(\bar A_i)\cap A^{\varepsilon}_{d}\cap k}{k} - \lambda(\varphi (\bar A_i) \cap d)\Big| < \frac{\eta}{3}$$
		Since
		$$\forall_{d \in D'}\ \forall_{k > \max_i k^i_m}\ \forall_{\varepsilon \{0,1\}} \Big| \frac{C(\bar A_1)\cap ... \cap C(\bar A_n)\cap \bigcap_{d \in D'} A^\varepsilon_d\cap k}{k} - \lambda (\varphi(\bar A)\cap d)\Big| < \frac{\eta}{3}, $$ then
		for each $k>K$, whenever $K$ is sufficiently large
		$$\Big|\frac{C(\bar A_1)\cap ... \cap C(\bar A_n)\cap \bigcap_{d \in D'} A^\varepsilon_d\cap k}{k} - \frac{C(\bar A_1\cap .. \cap \bar A_n)\cap A^\varepsilon_d\cap k}{k} \Big| < \eta.$$
		The proof is complete.
	\end{proof}

	\noindent
	{\sc Ryszard Frankiewicz},
	Silesian University of Technology, Gliwice, Poland.
	\\
	{\sl e-mail: ryszard.frankiewicz@polsl.pl}
	\\
	
	\noindent
	{\sc Joanna Jureczko},
	Wroc\l{}aw University of Science and Technology, Poland
	\\
	{\sl e-mail: joanna.jureczko@pwr.edu.pl}

\end{document}